\newcommand{\C}{{\mathbb C}}
\newcommand{\D}{{\mathbb D}}
\newcommand{\N}{{\mathbb N}}
\newcommand{\R}{{\mathbb R}}
\newcommand{\cA}{\mathcal{A}}
\newcommand{\cB}{\mathcal{B}}
\newcommand{\Hdim}{\mathcal{H}}
\newcommand{\inv}{^{-1}}
\newcommand{\subeq}{\subseteq}
\newcommand{\supeq}{\supseteq}
\newcommand{\ovl}{\overline}
\newcommand{\del}{\delta}
\newcommand{\bslash}{\backslash}
\DeclareMathOperator{\diam}{diam}
\DeclareMathOperator{\length}{length}
\DeclareMathOperator{\id}{id}
\theoremstyle{plain}
\newtheorem{theorem}{Theorem}
\newtheorem{proposition}[theorem]{Proposition}
\newtheorem{lemma}[theorem]{Lemma}
\newtheorem{corollary}[theorem]{Corollary}
\theoremstyle{definition}
\newtheorem{claim}{Claim}
\theoremstyle{remark}
\begin{document}
\bibliographystyle{amsalpha}
\title[Quasisymmetric uniformization]{Quantitative quasisymmetric uniformization of compact surfaces}
\author{Lukas Geyer}
\author{Kevin Wildrick}
\address{Department of Mathematical Sciences, Montana State
  University, Bozeman, MT 59717}
\email{geyer@montana.edu}
\email{kevin.wildrick@montana.edu}
\subjclass[2010]{Primary 30C65; Secondary 30C62, 51F99}

\maketitle

\begin{abstract}
  Bonk and Kleiner showed that any metric sphere which is Ahlfors
  2-regular and linearly locally contractible is quasisymmetrically
  equivalent to the standard sphere, in a quantitative way. We
  extend this result to arbitrary metric compact orientable
  surfaces.
\end{abstract}

\section{Introduction and statement of results}
Through isothermal coordinates, every Riemannian metric on a compact
orientable surface $S$ determines a Riemann surface structure on
$S$. By the classical uniformization theorem, $S$ carries a
conformally equivalent Riemannian metric of constant curvature $1$ (in
the case of a sphere), $0$ (for a torus), or $-1$ (for higher genus
surfaces). Hence, the original Riemannian metric on $S$ can be
\emph{conformally deformed} to a metric of constant curvature.

The purpose of this note is to extend the above discussion to certain
classes of possibly non-smooth distances on compact orientable
surfaces. In this setting we have a metric, but no smooth structure,
and the appropriate category replacing the class of conformal mappings
is the class of \emph{quasisymmetric mappings}.

In a metric space $X$ we will denote the distance between $a$ and $b$
by $|a-b|_X$, or $|a-b|$ if the metric space $X$ is clear from the
context.  Let $X$ and $Y$ be metric spaces. An embedding $\phi \colon
X \to Y$, i.e., a homeomorphism from $X$ onto its image $\phi(X)$, is
a \emph{quasisymmetric embedding} if there is a homeomorphism $\eta
\colon [0,\infty) \to [0,\infty)$ such that for all triples of
distinct points $a$, $b$, and $c$ in $X$,
\begin{equation*}
  \frac{|\phi(a)-\phi(b)|_Y}{|\phi(a)-\phi(c)|_Y} \leq \eta
  \left(\frac{|a-b|_X}{|a-c|_X}\right),
\end{equation*}
The homeomorphism $\eta$ is called a \emph{distortion function} for
the mapping $\phi$. If a quasisymmetric embedding $\phi$ has a
distortion function $\eta$, it will be called an $\eta$-quasisymmetric
embedding. The inverse of an $\eta$-quasisymmetric map is
$\eta'$-quasisymmetric with $\eta'(t) = 1/\eta^{-1}(1/t)$. A proof of
this fact can be found in the book of Heinonen
\cite[Proposition~10.6]{Heinonen2001}, which also serves as an
excellent introduction to the theory of quasisymmetric mappings on
metric spaces. A quasisymmetric mapping distorts \emph{relative}
distances by a controlled amount. While it may distort distances, it
must do so more-or-less isotropically.

A motivating example arises from Cannon's Conjecture in geometric
group theory. The boundary at infinity of a Gromov hyperbolic group
carries a natural family of visual metrics, any two of which are
quasisymmetrically equivalent. For this reason, one might say that the
metric on the boundary of a Gromov hyperbolic group is defined only up
to quasisymmetry. Cannon's Conjecture can be phrased as follows: If the boundary $\partial_\infty
G$ of a Gromov hyperbolic group $G$ is homeomorphic to the sphere
$\mathbb{S}^2$, then each visual metric on $\partial_\infty G$ is
quasisymmetrically equivalent to the standard metric on
$\mathbb{S}^2$. If Cannon's conjecture is true, then a Gromov hyperbolic group whose boundary is homeomorphic to $\mathbb{S}^2$ acts discretely and
co-compactly by isometries on 3-dimensional hyperbolic space. See
\cite{Bonk2006} and the references therein for a more detailed discussion of this
problem.

The following quasisymmetric uniformization theorem of Bonk and
Kleiner \cite{BonkKleiner2002} led to significant progress on Cannon's
Conjecture:

\begin{theorem}[Bonk-Kleiner]\label{sphere} Let $(X,d)$ be an Ahlfors
  $2$-regular metric space that is homeomorphic to
  $\mathbb{S}^2$. Then $(X,d)$ is quasisymmetrically equivalent to
  $\mathbb{S}^2$ equipped with the smooth metric of constant curvature
  $1$ if and only if $(X,d)$ is linearly locally contractible.
\end{theorem}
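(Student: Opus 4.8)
The plan is to treat the two implications separately; the reverse implication is soft, while essentially all the work goes into the forward one. For the reverse implication, suppose $f\colon X\to\mathbb{S}^2$ is $\eta$-quasisymmetric. Two-sided linear local connectivity --- the conjunction of the requirements that points in a ball $B(x,r)$ can be joined by a connected set inside $B(x,\lambda r)$ and that points outside $B(x,r)$ can be joined by a connected set outside $B(x,r/\lambda)$ --- is a quasisymmetric invariant of bounded connected spaces, since in such spaces the quasisymmetric image of a ball is comparable to a ball with constants controlled only by the distortion function. The round sphere is linearly locally connected with a universal constant, and $f^{-1}$ is again quasisymmetric, so $X$ is $\lambda$-LLC with $\lambda=\lambda(\eta)$; for metric spaces homeomorphic to $\mathbb{S}^2$, $\lambda$-LLC is quantitatively equivalent to linear local contractibility. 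This gives the ``only if'' direction.

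For the ``if'' direction, assume $X$ is Ahlfors $2$-regular with constant $C$ and linearly locally contractible with constant $\lambda$. I would first record the preliminary facts about such metric spheres, above all that linear local contractibility upgrades to the two-sided $\lambda'$-LLC condition with $\lambda'=\lambda'(\lambda)$ --- the ``outer'' half coming from a planar duality argument based on the fact that the complement in $\mathbb{S}^2$ of a controlled topological disk about a metric ball is again controlled. The central construction is that of \emph{$K$-approximations}: for $\del_n=2^{-n}$ I would take a maximal $\del_n$-separated set in $X$ and, using the two LLC conditions, build from it a finite cover $\mathcal U_n$ of $X$ by topological open disks of diameter comparable to $\del_n$ with uniformly bounded overlap multiplicity, whose nerve is the $1$-skeleton of a triangulation $T_n$ of $\mathbb{S}^2$. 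Ahlfors $2$-regularity enters to bound the multiplicity and, crucially, to guarantee that $T_n$ is a \emph{$K$-approximation} with $K=K(C,\lambda)$ independent of $n$: combinatorial distances in $T_n$, rescaled by $\del_n$, are comparable up to the factor $K$ to metric distances in $X$, and combinatorial cardinalities are comparable to normalized areas.

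Next I would discretely uniformize. By the Koebe--Andreev--Thurston circle packing theorem, each triangulation $T_n$ is realized by a circle packing $P_n=\{D_U\}_{U\in\mathcal U_n}$ on the standard round sphere, unique up to M\"obius transformations; a three-point normalization pins it down, and sending each $U$ essentially onto $D_U$ defines a map $f_n\colon X\to\mathbb{S}^2$. The key assertion is that the $f_n$ are $\eta_0$-quasisymmetric with $\eta_0=\eta_0(C,\lambda)$, hence $n$-independent. I would check this via a modulus (relative-distance) criterion for quasisymmetry: modulus estimates for families of curves joining continua in $X$, which are available from Ahlfors $2$-regularity together with the LLC conditions, are converted by the $K$-approximation property into comparable combinatorial modulus estimates on $T_n$, and these are in turn comparable to conformal modulus estimates on $\mathbb{S}^2$ through the packing $P_n$. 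Granting the comparisons, points of $X$ at comparable scales map to points at comparable scales, which is quasisymmetry. Finally, the normalized $\eta_0$-quasisymmetric maps $f_n$ form an equicontinuous, hence precompact, family (Arzel\`a--Ascoli); a uniform subsequential limit $f$ is a non-constant uniform limit of $\eta_0$-quasisymmetric embeddings, hence itself $\eta_0$-quasisymmetric, and being a continuous injection of one topological $2$-sphere into another it is a homeomorphism. This $f$ is the desired quasisymmetric equivalence.

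The main obstacle is the $n$-independent quasisymmetry of the $f_n$, and inside it the comparison between the combinatorial modulus of a triangulation of $\mathbb{S}^2$ and the conformal modulus of its circle packing, with constants that do not degenerate as the vertex degrees grow. The classical ring-lemma estimates for circle packings require a bound on the degrees, which is not available here, so I would instead run the modulus comparison through Schramm's transboundary modulus, weighting each circle of the packing by its own size; this is precisely the device that renders the argument insensitive to unbounded degrees. A secondary difficulty is producing $K$-approximations with $K$ depending only on $C$ and $\lambda$: it is there, rather than in the uniformization step itself, that Ahlfors $2$-regularity and linear local contractibility must be combined with care.
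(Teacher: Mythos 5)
This statement is not proved in the paper at all: Theorem~\ref{sphere} is quoted as the Bonk--Kleiner theorem and used as a black box (it is invoked once, in the spherical case of Theorem~\ref{thm:main}), so the only meaningful comparison is with the original argument in \cite{BonkKleiner2002}. Your outline is essentially that argument: the ``only if'' direction via quasisymmetric invariance of the (two-sided) LLC condition and its quantitative equivalence with linear local contractibility for spheres, and the ``if'' direction via $K$-approximations at scales $2^{-n}$, realization of the associated triangulations by circle packings (Koebe--Andreev--Thurston), a combinatorial-versus-classical modulus comparison to get $n$-independent distortion, and an Arzel\`a--Ascoli limit. At the level of a sketch this is the right plan, with the usual caveat that the two steps you yourself flag --- producing $K$-approximations with $K=K(C,\lambda)$ whose nerves are $1$-skeleta of triangulations of $\mathbb{S}^2$, and the uniform modulus comparison (in Bonk--Kleiner this is run through uniformly quasi-M\"obius maps on the vertex sets, then normalized to quasisymmetry) --- are exactly where all the work of the original paper lives and are only named here, not carried out.

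One concrete misstep: your stated ``main obstacle,'' that no bound on vertex degrees is available and that one must therefore replace the Ring Lemma by Schramm's transboundary modulus, rests on a false premise. Ahlfors $2$-regularity implies the doubling property, and a maximal $\delta_n$-separated set yields a cover by balls of radius comparable to $\delta_n$ with overlap multiplicity bounded purely in terms of the regularity constant; consequently the nerve, and the triangulation $T_n$ built from it, have valence bounded by a constant depending only on the data (bounded valence is in fact part of the definition of a $K$-approximation in \cite{BonkKleiner2002}). So the classical Ring Lemma applies with constants depending only on the data, and the detour through transboundary modulus --- which is the device needed for carpets and domains with many boundary components, and whose comparability to classical modulus in this setting you would still have to establish --- is unnecessary. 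As written, your plan routes its hardest step through an unproved substitute for a tool that is available; reinstating the bounded-valence/Ring Lemma argument of Bonk--Kleiner repairs this.
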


A metric space $X$ is \emph{linearly locally contractible}, if there
is a constant $\Lambda \geq 1$ such that for each point $x \in X$ and
radius $0<r<(\diam X)/\Lambda$, the ball $B(x,r)$ is contractible
inside the dilated ball $B(x,\Lambda r)$. The metric space $X$ is
\emph{Ahlfors $2$-regular} if there is a constant $K \geq 1$ such that
for each $x \in X$ and radius $0<r<2\diam X$, the two-dimensional
Hausdorff measure $\Hdim^2$ satisfies
\begin{equation*}
  \frac{r^2}{K} \leq \Hdim^2(B(x,r)) \leq Kr^2.
\end{equation*}

Theorem~\ref{sphere} is quantitative in the sense that the
quasisymmetric map can be chosen to have a distortion depending only
on the \emph{data} of $X$, i.e., the constants $\Lambda$ and $K$
appearing in the linear local contractibility and Ahlfors
$2$-regularity conditions.

The linear local contractibility condition is a quantitative
quasisymmetric invariant, i.e., if $f \colon X \to Y$ is
$\eta$-quasisymmetric and $X$ is linearly locally contractible with
constant $\Lambda$, then $Y$ is linearly locally contractible with a
constant that depends only on $\Lambda$ and $\eta$.  In contrast,
Ahlfors $2$-regularity is not a quasisymmetric invariant.

Given a Gromov-hyperbolic group $G$ with $\partial_\infty G$
homeomorphic to $\mathbb{S}^2$, each visual metric on $\partial_\infty
G$ is linearly locally contractible. However, it is not known whether
there always is an Ahlfors $2$-regular visual metric on
$\partial_{\infty}G$. Simple examples show that Theorem~\ref{sphere}
fails without the assumption of Ahlfors $2$-regularity.

Problems outside of geometric group theory, such as the search for an
intrinsic characterization of $\R^2$ up to bi-Lipschitz equivalence
(note that Ahlfors $2$-regularity \emph{is} a bi-Lipschitz invariant),
led to the development of versions of Theorem~\ref{sphere} for
non-compact simply connected surfaces \cite{Wildrick2008} and for a
large class of planar domains \cite{MerenkovWildrick2013}, as well as
a local version \cite{Wildrick2010}. In this work, we provide a
version of Theorem~\ref{sphere} that applies to all orientable compact
surfaces, including those of higher genus. This is part of an ongoing
program to complete the classification of Ahlfors $2$-regular and
linearly locally contractible metric surfaces up to quasisymmetry.

\begin{theorem}\label{thm:main intro} Let $(X,d)$ be a
  metric compact orientable surface. Assume that $(X,d)$ is Ahlfors
  $2$-regular and linearly locally contractible. Then there exists a
  Riemannian metric $\hat{d}$ of constant curvature $1$, $0$, or $-1$
  on $X$ such that the identity map $\id:(X,d) \to (X,\hat{d})$ is
  quasisymmetric with a distortion function depending only on the
  data of $X$.
\end{theorem}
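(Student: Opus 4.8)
The genus-$0$ case is precisely Theorem~\ref{sphere}, so the point is to treat higher genus; the plan is to equip $X$ with a Riemann surface structure by a local-to-global construction and then apply the classical uniformization theorem. The first step is to produce a finite quasisymmetric atlas. Using the local version of the Bonk--Kleiner theorem \cite{Wildrick2010}, every point of $X$ has a neighborhood that is $\eta_0$-quasisymmetrically equivalent to a planar domain, with $\eta_0$ depending only on the data; by compactness, finitely many such charts $\phi_i\colon U_i\to\Omega_i\subseteq\C$, $i=1,\dots,N$, cover $X$, with $N$ and $\eta_0$ controlled. (Alternatively one can derive such charts directly from Theorem~\ref{sphere}, by attaching to a suitably chosen Jordan domain $U_i\subseteq X$ a controlled collar and cap to manufacture an Ahlfors $2$-regular, linearly locally contractible metric sphere $\widehat U_i$ with data controlled by that of $X$, applying Theorem~\ref{sphere} to $\widehat U_i$, and restricting the resulting map to $U_i$; this is essentially the mechanism behind \cite{Wildrick2010}.)

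Next I would pass to a conformal structure. The transition maps $\phi_i\circ\phi_j\inv$ are compositions of quasisymmetric homeomorphisms between subdomains of $\C$, hence $K$-quasiconformal with $K=K(\eta_0)$, so the $U_i$ form a $K$-quasiconformal atlas on $X$. Such an atlas determines, via the measurable Riemann mapping theorem, a canonical complex structure: one obtains a compact Riemann surface $\mathcal{R}$ homeomorphic to $X$ and of the same genus, together with quasiconformal identifications under which each $\phi_i$ becomes $K$-quasiconformal between $\mathcal{R}|_{U_i}$ and $\Omega_i$. Since $\mathcal{R}$ is a compact Riemann surface, classical uniformization provides a conformal Riemannian metric of constant curvature $1$, $0$, or $-1$ according to the genus; transporting it to $X$ along the homeomorphism yields the candidate metric $\hat d$.

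It remains to control the distortion of $\id\colon(X,d)\to(X,\hat d)$. Locally this is immediate: fixing $p\in U_i$ and a holomorphic coordinate $\psi_i$ for $\mathcal{R}$ near $p$, the identity factors on a small neighborhood of $p$ as $\psi_i\inv\circ(\psi_i\circ\phi_i\inv)\circ\phi_i$, where $\phi_i$ is $\eta_0$-quasisymmetric, $\psi_i\circ\phi_i\inv$ is $K$-quasiconformal between planar domains and hence locally quasisymmetric with distortion depending only on $K$, and $\psi_i$ is a smooth chart for the Riemannian metric $\hat d$ and hence bi-Lipschitz at small scales with constant tending to $1$. Composing, $\id$ is locally $\eta_1$-quasisymmetric on $X$ with $\eta_1$ depending only on the data.

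The main obstacle is the passage from local to global quasisymmetry. I would establish a general principle: a homeomorphism between two compact connected metric spaces that are linearly locally connected with controlled constants, and that is locally $\eta_1$-quasisymmetric, is globally $\eta$-quasisymmetric with $\eta$ depending only on $\eta_1$ and the connectivity data. The proof is a chaining argument — cover the source by a controlled number of balls on which the local estimate is valid, and propagate the three-point inequality along chains of uniformly bounded length, using linear local connectivity (a consequence of linear local contractibility) to compare distances along successive links. A point requiring care is that the connectivity constant of $(X,\hat d)$ is not given a priori; but Steps~1--3 exhibit $(X,\hat d)$ as the image of the linearly locally contractible space $(X,d)$ under a map that is locally quasisymmetric with controlled distortion, and linear local connectivity at all sufficiently small scales — all that the chaining argument uses — is preserved quantitatively under such maps. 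Granting this, $\id\colon(X,d)\to(X,\hat d)$ is $\eta$-quasisymmetric with $\eta$ depending only on the data of $X$, which completes the proof; the delicate part throughout is the bookkeeping that keeps every auxiliary constant, in particular the connectivity constant of the uniformized surface, under control.
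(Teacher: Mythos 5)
Your overall architecture (a uniform quasisymmetric atlas from \cite{Wildrick2010}, a compatible conformal structure via the measurable Riemann mapping theorem, classical uniformization, then a local-to-global argument) is the same as the paper's, but the quantitative heart of the proof is missing, at two linked points. First, your local estimate for $\id\colon(X,d)\to(X,\hat d)$ is not quantitative at a definite scale: you factor the identity near $p$ through a holomorphic coordinate $\psi_i$ for the uniformized structure and assert that $\psi_i$ is bi-Lipschitz ``at small scales,'' but how small those scales are, and on how large a piece of the chart the estimate holds, depends on the uniformized surface $Y$ --- in the flat and hyperbolic cases on its injectivity radius and on how large the image of each chart is relative to $\diam Y$ --- and neither of these is controlled by the data of $X$ at this stage. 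The paper addresses exactly this: it fixes a universal radius $\beta$ (Corollaries~\ref{cor:choose_beta_C} and~\ref{cor:choose beta}) so that the lifted chart piece $G_j(\C_\beta)$ lies in a ball of radius $l_{G_j}(0,1/2)/6$, and uses the behavior of the injectivity-radius function $r(u)$ to show that the covering projection is an isometry there (Claim~\ref{claim:isometric projection}); only then is $F$ (equivalently $\id$) quasisymmetric on the full balls $B(x_j,10r_0)$, i.e.\ at a scale that is a definite fraction of $\diam X$, with distortion depending only on the data.

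Second, the ``general principle'' you invoke --- a locally $\eta_1$-quasisymmetric homeomorphism between compact connected LLC spaces is globally quasisymmetric, quantitatively --- is false as stated. M\"obius transformations of the round sphere are conformal, hence locally quasisymmetric with a universal distortion function at sufficiently small scales, and source and target are LLC with a fixed constant, yet the global quasisymmetric distortion is unbounded along a degenerating family. A quantitative local-to-global statement (the paper's Theorem~\ref{lem:locqs}, after Tukia--V\"ais\"al\"a) needs, besides uniform local quasisymmetry on a cover whose Lebesgue number $L_X$ is controlled relative to $\diam X$, a lower bound $\delta$ on $|F(x)-F(x')|$ for $|x-x'|=L_X/2$, measured against $\diam Y$. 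Producing that $\delta$ is the real work: the paper's Claim~\ref{claim:conf_radius_bounds_hyp}, the Harnack-type inequality $l_{g_j}(0,\alpha)\ge \diam Y/C$, obtains it by showing that images of neighboring chart balls have comparable size and chaining over a cover of controlled cardinality. Your chaining sketch gestures at this mechanism, but as long as the scale of local quasisymmetry is uncontrolled (the first gap) you cannot bound the number of links or the comparability constants, and LLC of the target --- which you propose to transfer quantitatively --- is neither sufficient (see the M\"obius example) nor the ingredient actually needed. To repair the proposal you would have to add the injectivity-radius step and the image-size comparability step, at which point it becomes the paper's proof.
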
 

Here the \emph{data} of $X$ consists of the topological genus of $X$,
and the constants in the Ahlfors $2$-regularity and linear local
contractibility conditions.

Except for the statement of dependence of the distortion function only
on the data of $X$, Theorem \ref{thm:main intro} follows immediately
from the local uniformization theorem of \cite{Wildrick2010} and an
elementary local-to-global result for quasisymmetric mappings due to
Tukia and V\"ais\"al\"a \cite[Theorem 2.23]{TukiaVaisala1980}. While a
quantitative proof is significantly more involved, it provides much
more information about the ``space'' of quasisymmetric structures on
surfaces. For example, let us suppose that $(X,d)$ satisfies the
assumptions of Theorem~\ref{thm:main intro} and is homeomorphic to a
torus. All smooth Riemannian metrics of constant curvature $0$ on the
torus are quasisymmetrically equivalent, but not with a uniform
distortion function. In essence, the quantitative
Theorem~\ref{thm:main intro} allows us to assign to $(X,d)$ a point in
the Riemann moduli space of the torus that is ``roughly
optimal''. Finding a smooth metric on the torus that is
quasisymmetrically equivalent to $(X,d)$ with ``minimal''
quasisymmetric distortion seems to be both a difficult and ill-defined
question, but one of obvious interest.

Our proof of Theorem~\ref{thm:main intro} proceeds as follows. Let
$(X,d)$ be a metric surface as given in the theorem. By the local
uniformization result of \cite{Wildrick2010}, $(X,d)$ has an atlas of
uniformly quasisymmetric mappings. Our first step is to create a
compatible conformal atlas, giving $(X,d)$ the structure of a Riemann
surface. This step can be thought of as creating ``quasi-isothermal''
coordinates; while the chart transitions are conformal, the chart
mappings themselves are only quasisymmetries.  The classical
uniformization theorem then provides a globally defined conformal
homeomorphism $F \colon X \to Y$ to a Riemann surface that is the
quotient of the sphere, plane, or disk by an appropriate group of
M\"obius transformations. The Riemann surface $Y$ inherits a
Riemannian metric $d_Y$ of constant curvature $1$, $0$, or $-1$,
respectively. Although conformal maps are locally quasisymmetric, this
does not immediately give us global information about the metric
properties of the uniformizing map $F \colon (X,d) \to (Y,d_Y)$. We
show that the uniformizing mapping $F$ is in fact globally
quasisymmetric with a distortion function that depends only on the
data of $X$. The key idea in this step is a type of Harnack
inequality. The claim in the theorem then follows by taking $\hat{d}$
to be the pull-back of the metric $d_Y$ under $F$.

\section{Background and preliminary results}
\subsection{Notation}
In a metric space $X$, we will denote the distance between points $x$
and $y$ of $X$ by $|x-y|_X$ or $|x-y|$ when the space $X$ is
understood.  We denote by
\begin{equation*}
  B_X(x,r) = B(x,r) = \{ y : |x-y| < r \}
\end{equation*} 
the open ball of radius $r$ centered at $x$ and by $\ovl{B}(x,r)$ the
corresponding closed ball. For an open or closed ball $B$ of radius
$r$ and a number $\lambda > 0$, the notation $\lambda B$ denotes the
same type of ball with the same center and radius $\lambda r$.

We denote the complex plane with the standard Euclidean metric by
$\C$, and the disk model of hyperbolic space equipped with the
standard hyperbolic metric of curvature $-1$ by $\D$.  The open ball
of radius $\alpha>0$ centered at $0$ is denoted by $\C_\alpha$ in $\C$
and by $\D_\alpha$ in $\D$, i.e.,
\begin{equation*}
  \C_\alpha =B_\C(0,\alpha), \ \text{and}\ \D_\alpha =B_\D(0,\alpha).
\end{equation*}
Note that $\alpha$ is the Euclidean radius for $\C_\alpha$, whereas it
is the hyperbolic radius for $\D_\alpha$.

\subsection{Quasisymmetric mappings}
Let $\phi \colon X \to Y$ be an embedding of metric spaces. For $x \in
X$ and $0<r<\frac12 \diam X$, define
\begin{align*} 
  L_\phi(x,r)&:=\sup\{|\phi(x)-\phi(y)|_Y: |x-y|_X \leq r\}, \ \text{and}\\
  l_\phi(x,r)&:=\inf\{|\phi(x)-\phi(y)|_Y: |x-y|_X \geq r\}.
\end{align*}
If $\phi$ is an $\eta$-quasisymmetric embedding, then for $x \in X$
and ${0<r_1, r_2<\frac{1}{2}\diam X}$, 
\begin{equation*}
  \frac{L_\phi(x,r_1)}{l_\phi(x,r_2)} \leq
  \eta\left(\frac{r_1}{r_2}\right).
\end{equation*}

We will need a statement about the equicontinuity of quasisymmetric
maps, which is a slightly more general version of \cite[Corollary
10.27]{Heinonen2001}.
\begin{lemma}
  \label{lem:qs-embedding-equicontinuity}
  Let $\phi \colon U \to V$ be an $\eta$-quasisymmetric embedding of metric
  spaces and let $D_U$ and $D_V$ be positive real numbers.  If $\diam
  U \geq D_U$ and $\diam V \leq D_V$, then 
  \begin{equation*}
    \omega(t) = \max\left\{\frac{3D_Vt}{D_U}, D_V
      \eta\left(\frac{3t}{D_U}\right)\right\}
  \end{equation*}
  is a modulus of continuity for $\phi$.
\end{lemma}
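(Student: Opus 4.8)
The plan is to prove the two-sided modulus of continuity estimate by comparing distances under $\phi$ to the diameters $D_U$ and $D_V$, exploiting the quasisymmetry inequality with a cleverly chosen third point. Fix $x \in U$ and $y \in U$ with $|x-y|_U = t$. We may assume $t$ is small relative to $D_U$, since otherwise the bound $\omega(t) \geq D_V$ trivially dominates $|\phi(x)-\phi(y)|_V$. The key observation is that because $\diam U \geq D_U$, there must exist a point $z \in U$ whose distance from $x$ is comparable to $D_U$; more precisely, one can find $z$ with $|x - z|_U \geq D_U/3$ (if no such point existed, the diameter of $U$ would be at most $2D_U/3 < D_U$, a contradiction).

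With this auxiliary point $z$ in hand, I would apply the $\eta$-quasisymmetry condition to the triple $x, y, z$:
\begin{equation*}
  \frac{|\phi(x)-\phi(y)|_V}{|\phi(x)-\phi(z)|_V} \leq \eta\left(\frac{|x-y|_U}{|x-z|_U}\right) \leq \eta\left(\frac{3t}{D_U}\right),
\end{equation*}
where the second inequality uses monotonicity of $\eta$ together with $|x-y|_U = t$ and $|x-z|_U \geq D_U/3$. Since $|\phi(x)-\phi(z)|_V \leq \diam V \leq D_V$, this yields $|\phi(x)-\phi(y)|_V \leq D_V\, \eta(3t/D_U)$, which is one of the two terms in the maximum defining $\omega(t)$. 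The linear term $3D_V t / D_U$ in the maximum handles the case where $t$ is not small — indeed, if $3t/D_U \geq 1$ then, arguing directly, $|\phi(x) - \phi(y)|_V \leq D_V \leq 3D_V t/D_U$, so the bound holds regardless. In all cases, $|\phi(x)-\phi(y)|_V \leq \omega(t)$.

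It remains to check that $\omega$ is genuinely a modulus of continuity, i.e., that $\omega$ is continuous, nondecreasing, and satisfies $\omega(t) \to 0$ as $t \to 0^+$. Continuity and monotonicity follow since $\omega$ is the maximum of two such functions ($t \mapsto 3D_V t/D_U$ is linear and increasing; $t \mapsto D_V \eta(3t/D_U)$ is increasing and continuous because $\eta$ is a homeomorphism of $[0,\infty)$). The limit as $t \to 0^+$ is $0$ because $\eta(0) = 0$. I do not anticipate a serious obstacle here; the only mild subtlety is the selection of the auxiliary point $z$, which requires care about whether $\diam U$ is attained and about strict versus non-strict inequalities, but the factor of $3$ (rather than $2$) in the statement provides exactly the slack needed to absorb this. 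This argument closely parallels the proof of \cite[Corollary 10.27]{Heinonen2001}, with the diameter hypotheses $\diam U \geq D_U$ and $\diam V \leq D_V$ replacing the normalization used there.
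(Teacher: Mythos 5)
Your proof is correct and follows essentially the same argument as the paper: pick an auxiliary point $z$ with $|x-z|\ge D_U/3$ (guaranteed by $\diam U \ge D_U$), apply quasisymmetry to the triple $x,y,z$ together with $|\phi(x)-\phi(z)|\le D_V$ to get the $D_V\,\eta(3t/D_U)$ bound, and use the trivial estimate $|\phi(x)-\phi(y)|\le D_V \le 3D_Vt/D_U$ when $t \ge D_U/3$. The additional verification that $\omega$ is a modulus of continuity is a harmless extra.
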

\begin{proof}
  Given $x_1, x_2 \in U$ with $0 < |x_1 - x_2| < D_U/3$, there exists
  $x_3 \in U$ such that $|x_1-x_3| \ge D_U/3$. Then
  \begin{equation*}
    |\phi(x_1) -\phi(x_2)| \le 
    |\phi(x_1) - \phi(x_3)| \eta \left( \frac{|x_1 - x_2|}{|x_1 -
        x_3|} \right) \le D_V  \eta\left( \frac{3|x_1-x_2|}{D_U}\right).
  \end{equation*}
  If $|x_1-x_2| \geq D_U/3$, the trivial estimate $|\phi(x_1) -
  \phi(x_2)|\leq D_V$ yields the desired estimate.
\end{proof}

Note that the modulus of continuity provided by
Lemma~\ref{lem:qs-embedding-equicontinuity} is not scale-invariant. If
the metrics on $U$ and $V$ are scaled by the same quantity, the
resulting modulus of continuity may still change; see subsection
\ref{sec:data}.
 
The following result, which is a slight variation of
\cite[Theorem~2.23]{TukiaVaisala1980}, gives a local-to-global result
for quasisymmetric homeomorphisms between compact spaces in terms of
Lebesgue numbers. We include a proof for the reader's convenience.
Recall that $L > 0$ is a \emph{Lebesgue number} for a covering $\{ X_j
\}$ if for every set $E$ with $\diam E\leq L$ there exists $j$ such
that $E \subseteq X_j$.

\begin{theorem}[Tukia-V\"ais\"al\"a]\label{lem:locqs} 
  Let $F \colon X \to Y$ be a homeomorphism between compact connected
  metric spaces $X$ and $Y$. Suppose that
  \begin{itemize}
  \item $\{X_j\}_{j=1}^n$ is a finite open covering of $X$ with
    Lebesgue number $L_X < \diam X$.
  \item $\del>0$ satisfies the implication
    \begin{equation*}
      |x-x'| = L_X/2 \implies |F(x)-F(x')| \geq \del,
    \end{equation*}
  \item there is a quasisymmetric distortion function $\eta$ such that
    for each $j=1,\hdots, n$, the restricted mapping $F|_{X_j}$ is
    $\eta$-quasisymmetric.
  \end{itemize}
  Then $F$ is quasisymmetric with distortion function depending only
  on $\eta$ and the ratios $\diam(X)/L_X$ and $\diam(Y)/\del$.
\end{theorem}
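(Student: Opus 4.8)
The plan is to estimate the ratio $|F(a)-F(b)|/|F(a)-F(c)|$ for an arbitrary triple of distinct points $a,b,c\in X$ by splitting into cases according to the size of $|a-b|$ and $|a-c|$ relative to the Lebesgue number $L_X$. First I would record the easy global bounds: since $L_X/2<\diam X$, the hypothesis on $\delta$ together with connectedness of $X$ gives, for \emph{any} two points $x,x'$ with $|x-x'|\ge L_X/2$, a chain of points along a connected set joining $x$ to $x'$ with consecutive distances exactly $L_X/2$ (or a direct pair at distance $\ge L_X/2$), so that $|F(x)-F(x')|\ge\delta$; I would make this precise by the standard argument that in a connected metric space any two points at distance $\ge s$ can be joined by a finite $s$-chain whose number of links is controlled by $\diam X/s$. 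This yields a lower bound $|F(x)-F(x')|\ge \delta$ whenever $|x-x'|\ge L_X/2$, and hence, running the same argument for $F^{-1}$ is not available, but we do get: if $|a-b|\ge L_X/2$ and $|a-c|\ge L_X/2$ then $\delta\le|F(a)-F(b)|,|F(a)-F(c)|\le\diam Y$, so the ratio is between $\delta/\diam Y$ and $\diam Y/\delta$, which is controlled by the data.

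Next comes the main case, where $|a-b|$ is small. If $|a-b|<L_X/2$, I would pick (using the Lebesgue number property applied to the set $\{a,b'\}$ for a suitable $b'$ on a geodesic-like chain, or more simply to $\overline B(a,L_X/2)$ after noting $\diam\le L_X$) an index $j$ with $a,b\in X_j$; then $F|_{X_j}$ is $\eta$-quasisymmetric. The difficulty is that $c$ need not lie in $X_j$, so the quasisymmetry inequality on $X_j$ cannot be applied directly to the triple $(a,b,c)$. To get around this I would introduce an auxiliary point: choose $c'\in X_j$ with $|a-c'|$ comparable to $L_X/2$ (possible since $\diam X\ge\diam X>L_X$ and $X$ is connected, so $X_j$ — being a member of a cover with Lebesgue number $L_X$ — must itself have diameter $\ge L_X/2$; one shows this by covering a chain of length $L_X$). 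Applying $\eta$-quasisymmetry of $F|_{X_j}$ to the triple $(a,b,c')$ gives
\begin{equation*}
  \frac{|F(a)-F(b)|}{|F(a)-F(c')|}\le \eta\!\left(\frac{|a-b|}{|a-c'|}\right)\le \eta\!\left(\frac{2|a-b|}{L_X}\cdot C\right),
\end{equation*}
where $C$ depends only on $\diam X/L_X$. Since $|a-c'|\gtrsim L_X/2$, the global lower bound from the first paragraph gives $|F(a)-F(c')|\ge\delta$, hence $|F(a)-F(b)|\le \diam Y\cdot\eta(\,C'|a-b|/L_X\,)$, a modulus-of-continuity type bound for $F$ depending only on the data. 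Symmetrically, applying the same reasoning with the roles arranged to bound $|F(a)-F(c)|$ from \emph{below}: if $|a-c|<L_X/2$ pick $j'$ with $a,c\in X_{j'}$ and $b'\in X_{j'}$ with $|a-b'|\gtrsim L_X/2$, and use $\eta$-quasisymmetry of $F|_{X_{j'}}$ on $(a,c,b')$ to get $|F(a)-F(b')|/|F(a)-F(c)|\le\eta(|a-b'|/|a-c|)$, so that $|F(a)-F(c)|\ge \delta\,/\,\eta(C'\diam X/|a-c| )$, i.e. a lower bound for $F$ in terms of $|a-c|$.

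Finally I would assemble the cases. Write $t=|a-b|/|a-c|$. If both $|a-b|,|a-c|\ge L_X/2$, the first paragraph handles it. If $|a-b|<L_X/2$ and $|a-c|\ge L_X/2$, combine the upper bound $|F(a)-F(b)|\le\diam Y\cdot\eta(C'|a-b|/L_X)$ with $|F(a)-F(c)|\ge\delta$. If $|a-c|<L_X/2$, then $|a-b|$ could be anything; bound $|F(a)-F(b)|\le\diam Y$ (if $|a-b|$ is not small) or by the modulus-of-continuity estimate (if it is), and bound $|F(a)-F(c)|$ below by the lower estimate of the previous paragraph; in every sub-case the resulting bound on the ratio is of the form $\tilde\eta(t)$ for a homeomorphism $\tilde\eta$ built from $\eta$, $\diam X/L_X$, and $\diam Y/\delta$. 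A routine but slightly tedious bookkeeping step then shows these finitely many piecewise estimates can be dominated by a single distortion function $\tilde\eta$ depending only on the stated quantities, and that $\tilde\eta$ is (or can be replaced by) a genuine homeomorphism of $[0,\infty)$. The main obstacle is the one flagged above: the covering sets $X_j$ need not be ``large'' or mutually related, so every application of the local quasisymmetry must be set up with an auxiliary point inside the \emph{same} chart whose distance from the base point is comparably large, and verifying that such a point exists is exactly where connectedness and the Lebesgue-number hypothesis $L_X<\diam X$ are used.
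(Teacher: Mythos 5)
Your overall plan is the paper's plan: split into cases at the scale $L_X/2$, insert an auxiliary point at distance about $L_X/2$ from the base point inside a common chart, and combine local quasisymmetry with the $\delta$-hypothesis and $\diam Y$. But the assembly has a genuine gap in the sub-case $\max\{|a-b|,|a-c|\}\le L_X/2$. There both your upper bound for $|F(a)-F(b)|$ and your lower bound for $|F(a)-F(c)|$ are obtained by comparison with an auxiliary point at the \emph{fixed} scale $L_X/2$, so the best you get is an estimate of the shape
\begin{equation*}
  \frac{|F(a)-F(b)|}{|F(a)-F(c)|} \;\le\; C'\,\eta\!\left(\frac{C\,|a-b|}{L_X}\right)\eta\!\left(\frac{C\,L_X}{|a-c|}\right),
\end{equation*}
and such a product is not a function of the ratio $t=|a-b|/|a-c|$ alone: fix $t$ and let $|a-b|,|a-c|\to 0$; the first factor tends to $0$ while the second tends to $\infty$, and for a general distortion function (say exponential growth at infinity, polynomial decay at $0$) the product is unbounded. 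So the final ``bookkeeping'' step cannot be completed in this sub-case. The missing idea is the paper's first case: if $|a-b|\le L_X/2$ and $|a-c|\le L_X/2$, then $\diam\{a,b,c\}\le L_X$, so by the Lebesgue-number property the \emph{whole triple} lies in a single $X_j$, and local quasisymmetry gives $|F(a)-F(b)|/|F(a)-F(c)|\le\eta(t)$ directly, with no reference to $\delta$ or $\diam Y$. You never use the Lebesgue number to capture all three points at once, and that is exactly what is needed at small scales; the auxiliary point is only needed when one of the two distances exceeds $L_X/2$.

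Separately, the ``easy global bound'' of your first paragraph is not proved by the argument you sketch: knowing that consecutive points of a chain have images at mutual distance at least $\delta$ gives no lower bound on the distance between the images of the endpoints -- the triangle inequality runs the wrong way, and the image chain may double back toward its start. In the main cases you can avoid needing it: choose the auxiliary point at distance \emph{exactly} $L_X/2$ from $a$ (it exists since $X$ is connected and $L_X<\diam X$, and it lies in the same chart as the nearby point because the resulting triple has diameter at most $L_X$), so that the hypothesis applies verbatim; this is how the paper argues in its cases 2 and 3. A lower bound on $|F(a)-F(c)|$ for $|a-c|$ strictly larger than $L_X/2$ is still invoked when both distances are large (the paper's cases 2 and 4 use it as well), but your chaining step is not a justification of it, so it should either be proved separately or stated as an explicit strengthening of the $\delta$-hypothesis rather than derived this way.
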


\begin{proof} Let $a$, $b$, and $c$ be distinct points of $X$. Set
\begin{equation*}
  \rho = \frac{|a-b|}{|a-c|}, \ \text{and}\ \rho' =
  \frac{|F(a)-F(b)|}{|F(a)-F(c)|}.
\end{equation*}
We consider four cases.
\begin{enumerate}
\item Suppose that $\max\{|a-b|, |a-c|\} \leq L_X/2.$ Then $\{a,b,c\}$
  is contained in some $X_j$, and so $\rho' \leq \eta(\rho).$
\item Suppose that $|a-b| \leq L_X/2$ but $|a-c| > L_X/2$. Since $X$
  is connected with $\diam X >L_X$, there exists $x \in X$ with $|a-x|
  = L_X / 2$. Then there exists $j$ such that $\{ a, b, x \} \subseteq
  X_j$ and
  \begin{equation*}
    |F(a)-F(x)| \geq \del \ \text{and}\ \frac{|a-b|}{|a-x|} \leq
    \frac{2\rho\diam X}{L_X}.
  \end{equation*}
  This implies that
  \begin{equation*}
  \rho' =  \frac{|F(a)-F(b)|}{|F(a)-F(x)|}
  \frac{|F(a)-F(x)|}{|F(a)-F(c)|} \leq \eta\left(\frac{2\rho\diam
      X}{L_X}\right)\frac{\diam{Y}}{\del}.
  \end{equation*}
\item Suppose that $|a-b| > L_X/2$ but $|a-c| \leq L_X/2.$ By the same
  argument as in the previous case, there exists $x \in X$ with $|a-x|
  = L_X/2$, as well as an index $j$ such that $\{ a, c, x \} \subseteq
  X_j$. This gives
  \begin{equation*}
    |F(a)-F(x)| \geq \del \ \text{and}\ \frac{|a-x|}{|a-c|} \leq
    \frac{2\rho \diam X}{L_X}.
  \end{equation*}
  which implies that
  \begin{equation*}
    \rho' =  \frac{|F(a)-F(b)|}{|F(a)-F(x)|}
    \frac{|F(a)-F(x)|}{|F(a)-F(c)|} \leq
    \frac{\diam{Y}}{\del}\eta\left(\frac{2\rho \diam X}{L_X}\right).
  \end{equation*}
\item Suppose that $\min\{|a-b|,|a-c|\}>L_X/2.$ Then $\rho \ge
  \frac{L_X}{2 \diam X}$ and $\rho' \le \frac{\diam Y}{\delta}$,
  so
  \begin{equation*}
    \rho' \le 2 \left(\frac{\diam X}{L_X}\right) \left(\frac{\diam
        Y}{\delta}\right) \rho
  \end{equation*}

\end{enumerate}
Combining these estimates on $\rho'$ in terms of $\rho$ yields the
desired result.
\end{proof}

\subsection{Conformality and quasisymmetry} 
A conformal mapping $f \colon U \to V$ between domains in $\C$ is
quasisymmetric when restricted to a relatively compact subdomain $U'
\subset U$, with quasisymmetric distortion depending only on $U$ and
$U'$. This fact, which should be compared to Koebe's distortion
theorem, will play a key role in the proof of Theorem
\ref{thm:main}. A more general statement is even true: one may
consider quasiconformal mappings in higher dimensional Euclidean
spaces, although the quasisymmetric distortion then also depends on
the maximal quasiconformal dilatation. See \cite[Theorem
2.4]{Vaisala1981} for a proof of this fact, which we will use in the
following form:

\begin{proposition}\label{prop:conf qs}
  Let $G \colon \C_1 \to \C$ be a conformal embedding. For each $\beta \in
  (0,1)$, the restriction $G \colon \C_\beta \to \C$ is a quasisymmetric
  embedding with distortion function that depends only on $\beta$.
\end{proposition}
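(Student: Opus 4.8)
The plan is to prove something a bit stronger than stated, which is both cleaner and obviously sufficient: after a harmless normalization, the restriction $G|_{\ovl{\C_\beta}}$ is \emph{bi-Lipschitz} with constants depending only on $\beta$, and this immediately gives the quasisymmetry estimate with a \emph{linear} distortion function $\eta_\beta(t) = C(\beta)\,t$. The tools are the three classical distortion theorems for univalent functions (the Koebe distortion theorem, the Koebe growth theorem, and the Koebe $\tfrac14$-theorem); the whole point is to apply the latter two not only to $G$ itself but, crucially, to a local inverse branch of $G$.

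First I would normalize. Since being an $\eta$-quasisymmetric embedding is unchanged under pre- or post-composition with similarities of $\C$, I may replace $G$ by $z \mapsto (G(z)-G(0))/G'(0)$ and thereby assume $G$ lies in the class $\mathcal{S}$ of normalized univalent functions on $\C_1$, i.e. $G(0)=0$ and $G'(0)=1$. The Koebe distortion theorem then bounds $|G'|$ on $\ovl{\C_\beta}$ between the positive constants $m(\beta) := (1-\beta)/(1+\beta)^3$ and $M(\beta) := (1+\beta)/(1-\beta)^3$. The upper Lipschitz bound is then immediate, since $\ovl{\C_\beta}$ is convex: for $a,b \in \ovl{\C_\beta}$, $|G(a)-G(b)| = \left|\int_0^1 G'(a+t(b-a))(b-a)\,dt\right| \le M(\beta)\,|a-b|$.

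The main obstacle is the lower bound $|G(a)-G(c)| \ge c_0(\beta)\,|a-c|$, and in particular the case of two points that are \emph{not} close, where a naive chaining argument fails because $G'$ may cancel along the chain. Here is the device I would use. Write $\Omega = G(\C_1)$ and $s = \dist(G(a),\partial\Omega)$. Applying Koebe's $\tfrac14$-theorem to the rescaled map $\zeta \mapsto \big(G(a+(1-|a|)\zeta)-G(a)\big)/\big((1-|a|)G'(a)\big) \in \mathcal{S}$ shows $s \ge \tfrac14|G'(a)|(1-|a|) \ge \tfrac14 m(\beta)(1-\beta) =: \delta(\beta) > 0$. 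If $|G(a)-G(c)| \ge \tfrac12\delta(\beta)$, we are finished, because $|a-c| \le \diam\C_\beta = 2\beta$. Otherwise $G(c) \in B(G(a),\tfrac12 s)$, and the inverse branch $\psi(\zeta) = G^{-1}(G(a)+s\zeta)$ is a univalent self-map of $\C_1$ with $\psi(0)=a$ and $\psi'(0) = s/G'(a)$, so $(\psi-a)/\psi'(0) \in \mathcal{S}$; writing $G(c) = G(a)+s\zeta_0$ with $|\zeta_0| = |G(a)-G(c)|/s < \tfrac12$, the Koebe growth theorem gives $|c-a|/|\psi'(0)| \le |\zeta_0|/(1-|\zeta_0|)^2$, which rearranges to $|G(a)-G(c)| \ge |G'(a)|(1-|\zeta_0|)^2|a-c| \ge \tfrac14 m(\beta)|a-c|$. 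Combining the two cases yields $|G(a)-G(c)| \ge c_0(\beta)|a-c|$ with $c_0(\beta) > 0$ depending only on $\beta$.

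Putting the two bounds together, $c_0(\beta)|z-w| \le |G(z)-G(w)| \le M(\beta)|z-w|$ on $\ovl{\C_\beta}$, so for distinct $a,b,c \in \C_\beta$ we get $|G(a)-G(b)|/|G(a)-G(c)| \le \big(M(\beta)/c_0(\beta)\big)\,|a-b|/|a-c|$, which is the proposition with $\eta_\beta(t) = \big(M(\beta)/c_0(\beta)\big)\,t$. (Alternatively, one may simply observe that a conformal map is $1$-quasiconformal and quote \cite[Theorem~2.4]{Vaisala1981}, of which this proposition is the planar special case; the argument above is essentially an unwinding of that result in the conformal setting.)
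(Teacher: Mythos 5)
Your argument is correct, but it takes a genuinely different route from the paper: the paper does not prove Proposition~\ref{prop:conf qs} at all, it simply quotes V\"ais\"al\"a \cite[Theorem~2.4]{Vaisala1981}, whose statement is for quasiconformal maps in $\R^n$ and whose proof goes through modulus-of-curve-family/quasiconformal machinery. You instead give a self-contained classical function-theoretic proof: after normalizing to the class $\mathcal{S}$, the Koebe distortion theorem gives the upper Lipschitz bound on the convex set $\ovl{\C_\beta}$, and the lower bound is obtained by splitting into the case where $|G(a)-G(c)|$ is comparable to $\dist(G(a),\partial\Omega)$ (handled by the $\tfrac14$-theorem plus $\diam \C_\beta = 2\beta$) and the case where $G(c)$ lies well inside that distance, where applying the growth theorem to the inverse branch $\psi(\zeta)=G^{-1}(G(a)+s\zeta)$ gives $|G(a)-G(c)|\ge \tfrac14 m(\beta)|a-c|$; all steps check out, and the exhaustion of cases is legitimate since $\tfrac12\delta(\beta)\le\tfrac12 s$. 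What your approach buys: it is elementary, stays entirely within univalent function theory, and yields a stronger conclusion --- a bi-Lipschitz estimate for the normalized map and hence a \emph{linear} distortion function $\eta_\beta(t)=C(\beta)t$ --- with explicit constants. What the paper's citation buys: V\"ais\"al\"a's theorem covers quasiconformal maps (with distortion depending also on the dilatation), and the paper genuinely needs that generality later, e.g.\ in the proof of Lemma~\ref{lem:qs-conformal-atlas}, where the same reference controls the quasisymmetry of the quasiconformal maps $h_j$ in terms of $\|\mu_j\|_\infty$; your conformal argument would not substitute there. So your proof is a valid and more explicit replacement for this particular proposition, but not for the other use of the cited theorem.
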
 

An easy consequence of Proposition~\ref{prop:conf qs} is the following. 

\begin{corollary}\label{cor:choose_beta_C}
  There is a universal constant $0<\beta_{1}<1/2$ such that for any
  conformal embedding $G \colon \C_1 \to \C$,
  \begin{equation*}
    G(\C_{\beta_1}) \subeq B_{\C}\left(G(0),\frac{l_G(0,1/2)}{6}\right).
  \end{equation*}
\end{corollary}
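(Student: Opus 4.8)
The plan is to combine Proposition~\ref{prop:conf qs} with one elementary topological fact about the image of $G$: that $l_G(0,1/2)$ equals the Euclidean distance from $G(0)$ to the Jordan curve $J := G(\{y : |y| = 1/2\})$, and in particular that it is positive and realized at some point $y_*$ with $|y_*| = 1/2$. To see this, I would first use invariance of domain: since $G$ is an embedding, it is a homeomorphism onto the open set $G(\C_1)$, so $J$ is a Jordan curve, $G(\C_{1/2})$ is open, and $\partial G(\C_{1/2}) = J$; a short argument with the Jordan curve theorem then identifies $G(\C_{1/2})$ with the bounded complementary component of $J$. Consequently $B_\C\bigl(G(0), \dist(G(0),J)\bigr) \subseteq G(\C_{1/2})$, while injectivity of $G$ forces $G(y) \notin G(\C_{1/2})$ whenever $|y| \ge 1/2$; hence $|G(0) - G(y)| \ge \dist(G(0),J)$ for all such $y$, and together with the trivial reverse inequality this gives $l_G(0,1/2) = \dist(G(0),J)$. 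Positivity is clear because $G(0) \notin J$, and the minimizer $y_*$ exists by compactness of $\{|y| = 1/2\}$.

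Granting this, I would apply Proposition~\ref{prop:conf qs} with $\beta = 3/4$ to obtain a distortion function $\eta$ depending only on $\beta$ — hence a universal one — for which $G|_{\C_{3/4}}$ is $\eta$-quasisymmetric. Since $\eta$ is an increasing homeomorphism of $[0,\infty)$ fixing $0$, the quantity $\beta_1 := \tfrac13\min\{1,\eta^{-1}(1/6)\}$ is a universal constant lying in $(0,1/2)$ with $\eta(2\beta_1) < 1/6$. Now fix $z \in \C_{\beta_1}$. If $z = 0$ there is nothing to prove, since $l_G(0,1/2) > 0$. Otherwise $0$, $z$, and $y_*$ are distinct points of $\C_{3/4}$, so the quasisymmetry estimate together with $|G(y_*) - G(0)| = l_G(0,1/2)$ gives
\[
  |G(z) - G(0)| \;\le\; \eta\!\left(\frac{|z|}{|y_*|}\right) |G(y_*) - G(0)| \;=\; \eta\bigl(2|z|\bigr)\, l_G(0,1/2) \;<\; \eta(2\beta_1)\, l_G(0,1/2) \;<\; \frac{l_G(0,1/2)}{6},
\]
where the first strict inequality uses $|z| < \beta_1$ and monotonicity of $\eta$. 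Since $z \in \C_{\beta_1}$ was arbitrary, this is exactly the inclusion $G(\C_{\beta_1}) \subseteq B_\C\bigl(G(0), l_G(0,1/2)/6\bigr)$.

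I expect the only genuine obstacle to be the first step: ruling out that points $y$ with $|y| \ge 1/2$ — in particular those with $|y|$ close to $1$, which lie outside every disk $\C_\beta$ on which Proposition~\ref{prop:conf qs} supplies a uniform distortion function — are mapped arbitrarily close to $G(0)$. This is where the injectivity of $G$ is indispensable, entering through invariance of domain and the Jordan curve theorem; once $l_G(0,1/2)$ is pinned to $\dist(G(0),J)$, the rest is a one-line application of Proposition~\ref{prop:conf qs} and the choice of a sufficiently small universal $\beta_1$. As an alternative to the topological argument one could instead invoke the Koebe one-quarter theorem, which yields $B_\C(G(0), |G'(0)|/8) \subseteq G(\C_{1/2})$, together with the Koebe growth estimate for $|G(z) - G(0)|$; but staying with Proposition~\ref{prop:conf qs} keeps the proof internal to the paper.
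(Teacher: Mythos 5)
Your proof is correct, and its quasisymmetric core is exactly the paper's argument: apply Proposition~\ref{prop:conf qs} to a restriction of $G$ whose domain contains the circle $|y|=1/2$, obtain a universal distortion function $\eta$, and choose a universal $\beta_1$ with $\eta(2\beta_1)<1/6$. Where you go beyond the paper is the preliminary identification of $l_G(0,1/2)$. The paper's proof is one line: it invokes the ratio estimate $L_G(0,\beta)\le \eta(2\beta)\,l_G(0,1/2)$ for the restriction $G\colon \C_{1/2}\to\C$, tacitly treating the infimum defining $l_G(0,1/2)$ (taken over all $y\in\C_1$ with $|y|\ge 1/2$, most of which lie outside any disk $\C_{\beta'}$ with $\beta'<1$ on which Proposition~\ref{prop:conf qs} gives uniform control) as if it were governed by the values of $G$ on the circle $|y|=1/2$. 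Your topological step --- injectivity and invariance of domain give $\partial G(\C_{1/2})=G(\{|y|=1/2\})=:J$, hence $B_\C\bigl(G(0),\dist(G(0),J)\bigr)\subeq G(\C_{1/2})$ and therefore $l_G(0,1/2)=\dist(G(0),J)$, attained at some $y_*$ with $|y_*|=1/2$ --- is precisely the justification that makes this legitimate, so your write-up is a more careful rendering of the same proof rather than a different route. Two minor remarks: you do not need the Jordan curve theorem itself, since the connectedness argument you give (the ball misses $\partial G(\C_{1/2})=J$ and meets $G(\C_{1/2})$) already yields the inclusion; and the minimizer $y_*$ is a convenience, as applying the quasisymmetry inequality to every $y$ with $|y|=1/2$ and then taking the infimum would serve equally well. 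Your choice $\beta_1=\frac{1}{3}\min\{1,\eta^{-1}(1/6)\}$ is universal and lies in $(0,1/2)$, as required.
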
 
\begin{proof} 
  By Proposition \ref{prop:conf qs}, there is a universal
  quasisymmetric distortion function $\eta$ for $G \colon \C_{1/2} \to
  \C$. Hence, if $\beta \in (0,1/2)$, then
  \begin{equation*}
    L_G(0,\beta) \leq \eta\left(2\beta\right)l_G(0,1/2)
  \end{equation*}
  Thus, choosing $0<\beta_1<1/2$ so small that $\eta(2\beta_1)<1/6$
  fulfills the requirements of the statement.
\end{proof} 
We will also need a hyperbolic version of this result. Recall that the
hyperbolic disk $\D$ is equipped not just with a conformal structure,
but also with the standard hyperbolic metric. Our proof employs
Koebe's distortion theorem for specificity, but could also be carried
out using Proposition~\ref{prop:conf qs} alone.

\begin{proposition}\label{prop:conf qs hyp}
  Let $G \colon \C_1 \to \D$ be a conformal embedding of the Euclidean unit
  disk into the hyperbolic plane. Then the restriction
  $G \colon \C_{1/10} \to \D$ is a quasisymmetric embedding with a
  universal distortion function.
\end{proposition}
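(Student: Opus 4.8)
The plan is to reduce, by a hyperbolic-isometry normalization, to the case where the image of the small disk lies in a fixed compact subset of $\D$, to note that on such a subset the hyperbolic and Euclidean metrics are bi-Lipschitz equivalent with universal constants, and then to quote Proposition~\ref{prop:conf qs}.

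First I would normalize. Given a conformal embedding $G \colon \C_1 \to \D$, choose a M\"obius automorphism $T$ of $\D$ (that is, a hyperbolic isometry) with $T(G(0)) = 0$ and replace $G$ by $T \circ G$; this is again a conformal embedding $\C_1 \to \C$, and since $T$ is an isometry of $(\D, d_\D)$ the replacement changes neither whether the map is quasisymmetric nor its distortion function. So we may assume $G(0) = 0$. As a set, $\D$ is the Euclidean unit disk $\C_1$, so the normalized $G$ is a holomorphic self-map of $\C_1$ fixing the origin, and the Schwarz lemma gives $|G(z)| \le |z|$ for all $z$; in particular $G(\C_{1/10}) \subseteq \ovl{\C_{1/10}}$. (One could instead invoke Koebe's distortion theorem or the Schwarz--Pick lemma here, in the spirit of the remark preceding the statement.)

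Next I would compare the two metrics on the compact set $\ovl{\C_{1/10}}$. There the hyperbolic conformal density $\lambda_\D(z) = 2/(1-|z|^2)$ satisfies $2 \le \lambda_\D(z) \le 200/99$. Integrating $\lambda_\D$ along the Euclidean segment between two points of the convex set $\ovl{\C_{1/10}}$ gives $d_\D(z,w) \le (200/99)|z-w|$ for $z, w \in \ovl{\C_{1/10}}$, while $d_\D(z,w) \ge 2|z-w|$ holds for all $z, w \in \D$ because the hyperbolic length of any path is at least twice its Euclidean length. Hence the inclusion $\iota \colon (\ovl{\C_{1/10}}, |\cdot|) \to (\D, d_\D)$ is $L$-bi-Lipschitz with $L = 200/99$, and is therefore quasisymmetric with distortion function $\eta_\iota(t) = L^2 t$.

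Finally I would assemble the pieces. By Proposition~\ref{prop:conf qs} applied with $\beta = 1/10$, the map $G \colon \C_{1/10} \to \C$ is $\eta_0$-quasisymmetric with a universal distortion function $\eta_0$. Since $G(\C_{1/10}) \subseteq \ovl{\C_{1/10}}$, the mapping $G \colon \C_{1/10} \to \D$ is the composition of $\iota$ with $G$ regarded as a map into $(\ovl{\C_{1/10}}, |\cdot|)$, and so it is $(\eta_\iota \circ \eta_0)$-quasisymmetric, a universal distortion function, as required. I expect the only genuinely delicate point to be the first step: a priori the image $G(\C_1)$ may approach the ideal boundary of $\D$, where the hyperbolic metric degenerates relative to the Euclidean one and the Euclidean estimates of Proposition~\ref{prop:conf qs} are not directly applicable; the hyperbolic-isometry normalization removes this obstruction at no cost, since hyperbolic isometries are quasisymmetries with the identity as distortion function.
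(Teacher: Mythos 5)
Your argument is correct, and it reaches the same structural goal as the paper's proof --- confining $G(\C_{1/10})$ to a region of $\D$ on which the hyperbolic and Euclidean metrics are uniformly comparable, and then composing with Proposition~\ref{prop:conf qs} --- but by a genuinely different route. The paper never normalizes the basepoint: it applies Koebe's distortion theorem to $f(z)=(G(z)-G(0))/G'(0)$ to get $B_\C(G(0),|G'(0)|/4)\subeq G(\C_1)\subeq\D$, hence $|G'(0)|/4\le 1-|G(0)|$, together with $G(\C_{1/10})\subeq B_\C(G(0),|G'(0)|/8)\subeq B_\C(G(0),(1-|G(0)|)/2)$, and then observes that on that Euclidean ball the identity into $(\D,d_\D)$ is, up to scaling, bi-Lipschitz with universal constants. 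You instead exploit the homogeneity of the hyperbolic plane: postcomposing with a M\"obius automorphism $T$ of $\D$ is harmless because it is an isometry of $(\D,d_\D)$, and once $G(0)=0$ the Schwarz lemma alone places $G(\C_{1/10})$ inside $\ovl{\C_{1/10}}$, where the density $2/(1-|z|^2)$ is pinched between $2$ and $200/99$. This is exactly the kind of alternative the authors hint at in the remark preceding the statement; your version trades Koebe's theorem for the Schwarz lemma plus the transitivity of the isometry group, and gives completely explicit constants, while the paper's version has the (minor) merit of working directly at an arbitrary basepoint without moving it. One cosmetic point: your inclusion $\iota$ satisfies $2|z-w|\le d_\D(z,w)\le \frac{200}{99}|z-w|$, so the sharper distortion function is $\eta_\iota(t)=\frac{100}{99}\,t$; your choice $\eta_\iota(t)=L^2t$ is larger but of course still valid.
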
 

\begin{proof} 
  The map $f(z) = \frac{G(z)-G(0)}{G'(0)} = z + O(z^2)$ is conformal
  in $\C_1$, so by Koebe's distortion theorem \cite{Duren1983},
  the image $f(\C_1)$ contains the Euclidean disk $\C_{1/4}$, and the
  image of the Euclidean disk $\C_{1/10}$ is contained in the
  Euclidean disk $\C_{10/81} \subseteq \C_{1/8}$. This implies
  \begin{equation*}
    B_\C\left(G(0), \frac{|G'(0)|}{4}\right) \subeq G(\C_1) \subeq \D,
  \end{equation*}
  and
  \begin{equation*}
    G\left(\C_{1/10}\right)\subeq B_\C\left(G(0),
      \frac{|G'(0)|}{8}\right)  \subeq B_\C\left(G(0),
      \frac{1-|G(0)|}{2}\right).
  \end{equation*}
  It follows from the definition of the hyperbolic metric that the
  identity mapping
  \begin{equation*}
    \id \colon B_\C\left(G(0), \frac{1-|G(0)|}{2}\right) \to \D
  \end{equation*}
  is, up to scaling, a bi-Lipschitz embedding with universal constants,
  so it is quasisymmetric with universal distortion. Since the
  composition of quasisymmetric maps is quasisymmetric,
  quantitatively, the result follows from Proposition \ref{prop:conf
    qs}.
\end{proof} 
  
The following statement is an easy corollary of Proposition
\ref{prop:conf qs hyp}.

\begin{corollary}\label{cor:choose beta} 
  There is a universal constant $0<\beta_2<1/10$ such that for any
  conformal embedding $G \colon \C_1 \to \D$,
  \begin{equation*}
    G(\C_{\beta_2}) \subeq B_\D\left(G(0),\frac{l_G(0,1/2)}{6}\right).
  \end{equation*}
\end{corollary}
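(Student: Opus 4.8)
The plan is to follow the proof of Corollary~\ref{cor:choose_beta_C}, substituting Proposition~\ref{prop:conf qs hyp} for Proposition~\ref{prop:conf qs}. By Proposition~\ref{prop:conf qs hyp} the restriction $\phi := G|_{\C_{1/10}}$ is an $\eta$-quasisymmetric embedding into $\D$ for a universal distortion function $\eta$, and since $\diam \C_{1/10} = 1/5$, the estimate $L_\phi(0,r_1) \le \eta(r_1/r_2)\, l_\phi(0,r_2)$ holds for $0 < r_1, r_2 < 1/10$. Because $\ovl{B}_\C(0,\beta) \subseteq \C_{1/10}$ for $\beta < 1/10$, one has $L_G(0,\beta) = L_\phi(0,\beta)$, so for every $\beta \in (0,1/11)$,
\begin{equation*}
  L_G(0,\beta) \le \eta(11\beta)\, l_\phi(0,1/11).
\end{equation*}
Granting for the moment the comparison $l_\phi(0,1/11) \le l_G(0,1/2)$, the proof finishes quickly: since $\eta$ is a homeomorphism fixing $0$, choose $\beta_2 \in (0,1/11)$ with $\eta(11\beta_2) < 1/6$, so that $L_G(0,\beta_2) < l_G(0,1/2)/6$; then each point of $G(\C_{\beta_2})$ lies within hyperbolic distance $L_G(0,\beta_2)$ of $G(0)$, which is the asserted inclusion.

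The only genuine point, and the step I expect to be the main obstacle, is the comparison $l_\phi(0,1/11) \le l_G(0,1/2)$. The difficulty is that $l_\phi(0,1/11)$ is an infimum of hyperbolic distances $d_\D(G(0),G(w))$ over the \emph{annulus} $1/11 \le |w| < 1/10$ — the upper bound $1/10$ being forced by the restriction of $\phi$ to $\C_{1/10}$ — whereas $l_G(0,1/2)$ is an infimum over $|z| \ge 1/2$; these ranges of moduli are disjoint, so no monotonicity of $l$ in the radius is available. I would bridge them with a topological observation. Fix $z \in \C_1$ with $|z| \ge 1/2$. Since $G$ is a homeomorphism onto its image, $\Gamma := G(\{|w| = 1/11\})$ is a Jordan curve in $\D$ whose bounded complementary component is $G(\C_{1/11})$; hence $G(0)$ lies inside $\Gamma$, while $G(z) \notin \ovl{G(\C_{1/11})}$ lies outside it. Therefore the hyperbolic geodesic from $G(0)$ to $G(z)$ must cross $\Gamma$, say at $G(w^*)$ with $|w^*| = 1/11$; since $G(w^*)$ lies on that geodesic and $w^* \in \C_{1/10}$ with $|w^*| \ge 1/11$,
\begin{equation*}
  l_\phi(0,1/11) \le d_\D(G(0),G(w^*)) \le d_\D(G(0),G(z)).
\end{equation*}
Taking the infimum over all such $z$ gives $l_\phi(0,1/11) \le l_G(0,1/2)$, completing the argument.

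An alternative route, which avoids the quasisymmetry statement and instead parallels the proof of Proposition~\ref{prop:conf qs hyp}, is to post-compose $G$ with the Möbius automorphism of $\D$ carrying $G(0)$ to $0$; this is a hyperbolic isometry, so it changes neither $L_G(0,\cdot)$ nor $l_G(0,\cdot)$, and one may assume $G(0)=0$. Then the Schwarz lemma gives $|G'(0)| \le 1$ and confines $G(\C_{\beta_2})$ to a fixed neighborhood of the origin on which the hyperbolic and Euclidean metrics are comparable, while the Koebe distortion bounds $|G'(0)|\,|z|/(1+|z|)^2 \le |G(z)| \le |G'(0)|\,|z|/(1-|z|)^2$ yield $L_G(0,\beta_2) \le 5\,|G'(0)|\,\beta_2$ and $l_G(0,1/2) \ge \frac{4}{9}\,|G'(0)|$; choosing $\beta_2$ small enough that $\frac{45}{4}\beta_2 < \frac{1}{6}$ then finishes. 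I would present the first, shorter proof.
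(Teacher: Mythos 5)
Your proof is correct and takes essentially the same route as the paper: apply Proposition~\ref{prop:conf qs hyp} to $G|_{\C_{1/10}}$, use the $L_G/l_G$ quasisymmetry estimate, and choose $\beta_2$ so the distortion value is below $1/6$. The only difference is that the paper simply writes $L_G(0,\beta)\le\eta(10\beta)\,l_G(0,1/10)\le\eta(10\beta)\,l_G(0,1/2)$ without comment, whereas you make explicit, via the Jordan-curve/geodesic-crossing observation, why the comparison points lying outside the domain $\C_{1/10}$ of known quasisymmetry can still be controlled --- a legitimate (and careful) justification of a step the paper treats as immediate.
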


\begin{proof} 
  By Proposition \ref{prop:conf qs hyp}, there is a universal
  quasisymmetric distortion function $\eta$ for $G \colon \C(1/10) \to
  \D$. Hence, if $\beta \in (0,1/10)$, then
  \begin{equation*}
    L_G(0,\beta) \leq \eta\left(10\beta\right)l_G(0,1/10) \leq
    \eta\left(10\beta\right)l_G(0,1/2).
  \end{equation*}
  Choosing $\beta_2$ so small that $\eta(10\beta_2)<1/6$, we get the
  claim of the corollary.
\end{proof}

For the remainder of the paper, for convenience we define
$\beta = \min\{\beta_1,\beta_2\}$.

\subsection{The data of $X$, scalings, and normalization}
\label{sec:data}

Let $X$ be a metric space that is Ahlfors $2$-regular with constant
$K$, linearly locally contractible with constant $\Lambda$, and
homeomorphic to a compact orientable surface. We will refer to $\Lambda$ and $K$
as the \emph{data} of $X$. 

For $\lambda>0$, we may form a new metric space $X_\lambda$ by
multiplying the original metric on $X$ by $\lambda$. Then $X_\lambda$
is again Ahlfors $2$-regular and linearly locally contractible, and
$X_\lambda$ has the same data as $X$. Moreover, the identity mapping
from $X$ to $X_\lambda$ is $\eta$-quasisymmetric with
$\eta(t)=t$. Hence, in the proof of Theorem~\ref{thm:main intro}, we
may scale the domain as we see fit. 

\textbf{For the remainder of this article, let $X$ be a metric space
  that is Ahlfors $2$-regular with constant $K$, linearly locally
  contractible with constant $\Lambda$, homeomorphic to a compact orientable
  surface, and has unit diameter. When we state that a quantity
  depends only the data of $X$, we are implicitly assuming this
  normalization.}

The main reason for this convention (aside from notational
convenience) is that Lemma~\ref{lem:qs-embedding-equicontinuity} is
not scale-invariant. Without this normalization, we would not be able
to say that certain moduli of continuity depend only on the data of
$X$.

\section{Finding a conformal structure} 
By \cite[Theorem 4.1]{Wildrick2010}, $X$
possesses a quasisymmetric atlas:

\begin{theorem}
  \label{thm:WildrickLocal}
  There is a quantity $A_0 \geq 1$ and a quasisymmetric distortion
  function $\eta$, each depending only on the data of $X$, such that
  for each $0<R \leq 1/ A_0$ there is a neighborhood $U$ of $x$ such
  that
  \begin{enumerate}
    \item $B(x_0,R/A_0) \subseteq U \subseteq B(x_0,A_0 R)$,
    \item there exists an $\eta$-quasisymmetric map $\phi \colon U \to \C_1$
      with $\phi(x_0) = 0$,
  \end{enumerate}
\end{theorem}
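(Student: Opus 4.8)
The plan is to derive the statement from the quantitative Bonk--Kleiner theorem (Theorem~\ref{sphere}) by \emph{completing} a ball around $x_0$ to a metric $2$-sphere that is Ahlfors $2$-regular and linearly locally contractible with constants depending only on the data of $X$. Fix a large constant $A_0 \ge 1$, depending only on the data, and let $0 < R \le 1/A_0$. Once $A_0$ is large, the hypotheses give the standard structural consequences at scale $R$ (compare \cite{BonkKleiner2002,Wildrick2010}): $X$ is $\lambda$-LLC in both senses at such scales with $\lambda$ controlled by the data, and the closed balls $\ovl{B}(x_0, tR)$ for $1 \le t \le 5$ are topological closed disks. I will take $U := B(x_0, R)$, which is a topological disk with $B(x_0, R/A_0) \subeq U \subeq B(x_0, A_0 R)$, and produce an $\eta$-quasisymmetric embedding $\phi \colon U \to \C_1$ with $\phi(x_0) = 0$.

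First I would choose a Jordan curve $\gamma$ inside the annulus $B(x_0, 4R) \setminus \ovl{B}(x_0, 3R)$ separating $x_0$ from $X \setminus B(x_0, 4R)$ and bounding a Jordan domain $W$ with $\ovl{B}(x_0, 3R) \subeq W \subeq B(x_0, 4R)$, so that $\ovl{W}$ is a topological closed disk. Let $\widehat{X}$ be the \emph{double} of $\ovl{W}$ across $\gamma$: two isometric copies $\ovl{W}_1, \ovl{W}_2$, each carrying the subspace metric inherited from $X$, glued along $\gamma$ by the identity and equipped with the resulting quotient metric $\widehat{d}$. Then $\widehat{X}$ is homeomorphic to $\mathbb{S}^2$ with $\diam \widehat{X}$ comparable to $R$. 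Since $\gamma \subeq \ovl{W}$, the metric $\widehat{d}$ agrees with $d$ on each $\ovl{W}_i$; in particular $\widehat{d}$ restricts to $d$ on $\ovl{B}(x_0,R) \subeq \ovl{W}_1$, so $B_{\widehat{X}}(x_0, R) = U$ as metric spaces, while the point $x_0' \in \ovl{W}_2$ corresponding to $x_0$ satisfies $\widehat{d}(x_0, x_0') \ge 6R$ and $\dist_{\widehat{X}}(U, x_0') \ge 3R$.

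\textbf{The main obstacle} is choosing $\gamma$ so that $\widehat{X}$ is Ahlfors $2$-regular and linearly locally contractible with constants depending only on the data of $X$. Away from $\gamma$ this is automatic because $\widehat{X}$ locally coincides with $X$, so the entire difficulty lives in a collar of the gluing curve: the upper Ahlfors bound and the contractibility estimates transfer directly from $X$, but the lower Ahlfors bound forces $W$ to occupy a definite proportion of every small ball centered on $\gamma$, and the contractibility of loops straddling $\gamma$ forces $\gamma$ to have bounded turning. Together these require $\gamma$ to be a $\mu$-quasicircle bounding a uniform domain inside $X$, with $\mu$ controlled by the data, and one may additionally insist that $\gamma$ be $\Hdim^2$-negligible. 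Producing such a curve is the technical core: one runs an annulus-slicing / co-area type estimate on $B(x_0,4R)\setminus\ovl{B}(x_0,3R)$ (which by Ahlfors $2$-regularity has $\Hdim^2$-measure $\lesssim R^2$) to obtain separating continua of controlled $\Hdim^1$-length, and combines this with the combinatorial/modulus separation arguments of Bonk and Kleiner \cite{BonkKleiner2002}, applied at scale $R$ to a cover of the annulus by balls of comparable radius, to extract a separating Jordan curve that is quantitatively a quasicircle. This is in essence the content of the local uniformization theorem of \cite{Wildrick2010}, and I expect it to be by far the most involved step.

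Granting such an $\widehat{X}$, Theorem~\ref{sphere} provides an $\eta_0$-quasisymmetric homeomorphism $\Psi \colon \widehat{X} \to \mathbb{S}^2$ onto the round sphere, with $\eta_0$ depending only on the data. Post-composing with a rotation, we may assume $\Psi(x_0')$ is the north pole. Since $\dist_{\widehat{X}}(U, x_0') \ge 3R$ while $\diam \widehat{X} \lesssim R$, an equicontinuity argument for $\Psi^{-1}$ (which is $\eta_0'$-quasisymmetric, with $\eta_0'$ determined by $\eta_0$) via Lemma~\ref{lem:qs-embedding-equicontinuity} shows that $\Psi(U)$ lies at spherical distance at least some $c_1 > 0$, depending only on the data, from the north pole; hence stereographic projection $\sigma$ from the north pole is bi-Lipschitz on $\Psi(U)$ with constant depending only on the data, and $g := \sigma \circ \Psi|_U \colon U \to \C$ is an $\eta_1$-quasisymmetric embedding with $\eta_1$ depending only on the data. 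Finally, writing $L := \sup \{ |g(y) - g(x_0)| : y \in U \} \in (0,\infty)$, the similarity $z \mapsto (z - g(x_0))/(2L)$ is $1$-quasisymmetric, so
\begin{equation*}
  \phi := \frac{g - g(x_0)}{2L} \colon U \longrightarrow \C_1
\end{equation*}
is $\eta_1$-quasisymmetric with $\phi(x_0) = 0$ and $\phi(U) \subeq \C_{1/2} \subeq \C_1$. After enlarging $A_0$ to absorb the finitely many smallness conditions used above, this yields the theorem with $A_0$ and $\eta := \eta_1$ depending only on the data of $X$.
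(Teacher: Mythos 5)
You should first know that the paper does not prove this statement at all: Theorem~\ref{thm:WildrickLocal} is quoted directly from \cite[Theorem 4.1]{Wildrick2010}, and the only argument the authors supply is the normalization $\phi(x_0)=0$, obtained by noting (via \cite[Proposition~10.8]{Heinonen2001}) that $|\phi(x_0)|$ is bounded away from $1$ in terms of $\eta$ and post-composing with the M\"obius transformation $T(z)=\frac{z-\phi(x_0)}{1-\overline{\phi(x_0)}z}$. Your proposal instead tries to reprove the local theorem from Theorem~\ref{sphere} by doubling a neighborhood of $x_0$ across a curve $\gamma$ and applying the quantitative Bonk--Kleiner theorem to the resulting metric sphere. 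The strategy is reasonable in outline, but as written it has a genuine gap, and you identify it yourself: the entire difficulty is to produce, at the prescribed location and scale, a separating Jordan curve $\gamma$ for which the doubled space is Ahlfors $2$-regular and linearly locally contractible with constants depending only on the data. You concede that this step ``is in essence the content of the local uniformization theorem of \cite{Wildrick2010}''; in other words, the one step you do not prove is essentially the statement being proved. The sketch offered for it does not close the gap: a coarea/slicing argument in the annulus yields separating continua of controlled $\Hdim^1$-length, but controlled length gives no bounded-turning or uniform-domain control whatsoever, and the passage from a rectifiable separating continuum to a quantitative quasicircle bounding a uniform domain (which is what the lower regularity bound and the LLC estimates for the double require near $\gamma$) is exactly the missing argument; invoking ``combinatorial/modulus separation arguments of Bonk and Kleiner'' does not supply it, since their proof does not construct quasicircles at prescribed locations and scales.

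Two secondary problems are worth flagging. First, your ``standard structural consequence'' that the closed balls $\ovl{B}(x_0,tR)$, $1\le t\le 5$, are topological closed disks is not a consequence of Ahlfors $2$-regularity and linear local contractibility; metric balls in such surfaces can fail badly to be Jordan domains, which is precisely why results of this type (including the statement you are proving) deliver a neighborhood $U$ squeezed between $B(x_0,R/A_0)$ and $B(x_0,A_0R)$ rather than a ball. Your construction of $W$ and of the double should not presuppose this. Second, your final map $\phi$ is only a quasisymmetric embedding with $\phi(U)\subeq\C_{1/2}$, whereas the way Theorem~\ref{thm:WildrickLocal} is used in Lemma~\ref{lem:qs-atlas} requires the charts to be homeomorphisms onto $\C_1$ (the proof there needs $\phi_x(B(x,r_0))\supeq\C_\alpha$ and treats $\phi_x^{-1}$ as defined on all of $\C_1$); this is fixable by standard distortion estimates or by composing with a further uniformizing map, but it needs an argument and changes the quantitative bookkeeping. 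If the intent is simply to use the literature, as the paper does, the efficient route is to cite \cite[Theorem 4.1]{Wildrick2010} and add only the M\"obius normalization.
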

The original theorem in \cite{Wildrick2010} does not include the
normalization $\phi(x_0) = 0$. However, since $\phi \colon U \to \C_1$ is
$\eta$-quasisymmetric, the basic distortion estimates
\cite[Proposition~10.8]{Heinonen2001} imply that  $\phi(x_0) \in
\C_\alpha$ where $\alpha<1$ depends only on $\eta$. As the M\"obius
transformation $T(z) = \frac{z-\phi(x_0)}{1-\overline{\phi(x_0)}{z}}$ has
quasisymmetric distortion depending only $|\phi(x_0)|$, we may assume that $\phi(x_0)=0$. Accordingly, given a pair $(U,\phi)$
where $\phi\colon U \to \C_1$ is a homeomorphism, we will call
$\phi\inv(0)$ the \emph{center} of $(U,\phi)$.

We use the atlas provided by Theorem \ref{thm:WildrickLocal} to
produce a conformal atlas on $X$ that is adapted to its metric. We
have separated the construction into two lemmas. The first is purely
metric; the second modifies the output of the first.

\begin{lemma}
  \label{lem:qs-atlas}
  Let $\rho \in (0,1)$ be given. Then there exists a quasisymmetric
  distortion function $\eta$, radii $\alpha$, $r_0>0$, and a positive
  integer $n \in \N$, such that the following statements hold:
  \begin{enumerate}
  \item There exists an atlas $\cA_{\rho} = \{ (U_j, \C_1, \phi_j) \mid
    j=1,\ldots,n \}$ of $X$, where each mapping $\phi_j \colon U_j \to \C_1$
    is an $\eta$-quasisymmetric homeomorphism with center denoted by $x_j$.
  \item The collection $\{B(x_j, r_0)\}_{j=1}^n$ is pairwise disjoint.
  \item The collection $\{B(x_j,2r_0)\}_{j=1}^n$ covers $X$.
  \item For each $j=1,\hdots,n$, it holds that $B(x_j,10r_0) \subeq U_j$, and 
    \begin{equation*}
      \C_\alpha \subseteq \phi_j(B(x_j,r_0)) \subseteq
      \phi_j(B(x_j,10r_0)) \subeq \C_\rho.
    \end{equation*}
  \end{enumerate}
  Moreover, $\eta$ depends only on the data of $X$, while $\alpha$, $r_0$, and $n$ depend only on the data of $X$ and $\rho$.
\end{lemma}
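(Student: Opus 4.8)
The plan is to build the atlas $\cA_\rho$ by a greedy maximal-packing argument at a scale $r_0$ that will be chosen at the end, using Theorem~\ref{thm:WildrickLocal} to produce the charts and the metric doubling/regularity of $X$ to control the cardinality $n$. First I would apply Theorem~\ref{thm:WildrickLocal} at each point $x \in X$ with a uniform radius $R$ (to be determined) to obtain, for every $x$, a neighborhood $U_x$ with $B(x, R/A_0) \subseteq U_x \subseteq B(x, A_0 R)$ and an $\eta_0$-quasisymmetric $\phi_x \colon U_x \to \C_1$ with $\phi_x(x) = 0$, where $A_0$ and $\eta_0$ depend only on the data. I would then fix $r_0 := R/(10 A_0)$, so that $B(x, 10 r_0) = B(x, R/A_0) \subseteq U_x$, giving the inclusion $B(x_j, 10r_0) \subseteq U_j$ demanded in part (4) automatically, once the centers $x_j$ are the chosen points.

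Next, to get (2) and (3) I would take $\{x_j\}_{j=1}^n$ to be a maximal $r_0$-separated set in $X$: maximality forces $\{B(x_j, 2r_0)\}$ to cover $X$ (if some $y$ were at distance $\geq 2r_0$ — indeed it suffices to be at distance $\geq r_0$ — from all $x_j$, we could add it, contradicting maximality), and $r_0$-separation makes $\{B(x_j, r_0)\}$ pairwise disjoint after halving, i.e. the balls $B(x_j, r_0/2)$ are disjoint; since the statement only needs $\{B(x_j,r_0)\}$ pairwise disjoint I would instead take the $x_j$ to be $2r_0$-separated and maximal, then $\{B(x_j,r_0)\}$ is pairwise disjoint and $\{B(x_j,2r_0)\}$ covers. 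Finiteness of $n$, and a bound on $n$ depending only on the data and $\rho$ (through $r_0$, which depends on $\rho$), follows from Ahlfors $2$-regularity: the disjoint balls $B(x_j, r_0)$ each have $\cH^2$-measure at least $r_0^2/K$, they all sit inside $X$ which has $\cH^2(X) \leq K (\diam X)^2 = K$ by the unit-diameter normalization, so $n \leq K^2/r_0^2$.

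The remaining point is part (4): producing $\alpha$ and arranging $\C_\alpha \subseteq \phi_j(B(x_j, r_0)) \subseteq \phi_j(B(x_j, 10 r_0)) \subseteq \C_\rho$. The right-hand inclusion is where $\rho$ enters and where I expect the main work. Since $\phi_j$ is $\eta_0$-quasisymmetric on $U_j \supseteq B(x_j, 10 r_0)$ and $\phi_j(x_j) = 0$, the basic distortion estimates \cite[Proposition~10.8]{Heinonen2001} bound $|\phi_j(y)|$ for $y \in B(x_j, 10 r_0)$ in terms of the ratio $|x_j - y|/|x_j - z|$ for a reference point $z \in U_j \setminus B(x_j, 10 r_0)$ at distance comparable to $A_0 R = 100 A_0^2 r_0$; that is, $L_{\phi_j}(x_j, 10 r_0) \leq \eta_0(10 r_0 / r') \cdot l_{\phi_j}(x_j, r')$ for suitable $r'$, but one still needs a lower bound on the competing distances to turn this into $L_{\phi_j}(x_j, 10 r_0) \leq \rho$. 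The honest fix is to apply Theorem~\ref{thm:WildrickLocal} \emph{not} at radius $R$ but to post-compose with a Möbius scaling: after obtaining $\phi_x$, the image $\phi_x(B(x, 10 r_0))$ is a subset of $\C_1$ containing $0$, of Euclidean diameter strictly less than that of $\phi_x(U_x)$; precomposing the chart domain and rescaling, or equivalently shrinking $R$ (hence $r_0$) until the uniform modulus-of-continuity estimate from Lemma~\ref{lem:qs-embedding-equicontinuity} applied to $\phi_j$ forces $L_{\phi_j}(x_j, 10 r_0) \leq \rho$, achieves the inclusion into $\C_\rho$ — this is where the dependence of $r_0$ (and thus $\alpha$ and $n$) on $\rho$ is genuinely used. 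For the left-hand inclusion $\C_\alpha \subseteq \phi_j(B(x_j, r_0))$: by the inverse quasisymmetry estimate, $\phi_j^{-1}$ restricted to $\phi_j(B(x_j, 10r_0))$ has a uniform modulus of continuity, so there is $\alpha > 0$, depending only on $\eta_0$ and on the (now fixed) value of $l_{\phi_j}(x_j, r_0)$ — which is itself bounded below uniformly once $r_0$ is fixed, again via quasisymmetry and the lower Ahlfors bound guaranteeing $B(x_j, r_0) \neq B(x_j, 10 r_0)$ nontrivially — such that $\phi_j^{-1}(\C_\alpha) \subseteq B(x_j, r_0)$, i.e. $\C_\alpha \subseteq \phi_j(B(x_j, r_0))$, as desired. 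Finally one records that $\eta = \eta_0$ depends only on the data (the charts are untouched except by a Möbius normalization with controlled distortion), while $r_0$, and hence $\alpha$ and $n$, depend on the data and on $\rho$. The main obstacle, to reiterate, is the quantitative control of the chart images at the two scales $r_0$ and $10 r_0$ simultaneously — getting the outer image inside $\C_\rho$ while keeping the inner image large enough to contain a definite $\C_\alpha$ — which is handled by choosing the base scale $r_0$ small in a way dictated by $\rho$ and the uniform modulus of continuity.
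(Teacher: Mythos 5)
Your overall architecture (charts from Theorem~\ref{thm:WildrickLocal}, a maximal $2r_0$-separated set for (2)--(3), Ahlfors $2$-regularity plus the unit-diameter normalization for the bound on $n$, and moduli of continuity for the inclusions in (4)) is the same as the paper's, but the key quantitative step is wrong. You tie the separation radius to the chart scale by setting $r_0 = R/(10A_0)$ and then propose to obtain $\phi_j(B(x_j,10r_0)) \subseteq \C_\rho$ by ``shrinking $R$ (hence $r_0$).'' This cannot work, because quasisymmetry is scale-invariant: with $10r_0 = R/A_0$, the image $\phi_x(B(x,R/A_0))$ always contains a disk $\C_c$ with $c>0$ depending only on $\eta$ and $A_0$, uniformly in $R$. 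Indeed, $\phi_x^{-1}$ is $\eta'$-quasisymmetric, $\phi_x^{-1}(0)=x$, and $\diam U_x \leq 2A_0R$, so comparing with a point $w$, $|w|=1/2$, gives $|\phi_x^{-1}(z)-x| \leq \eta'(2|z|)\,2A_0R$; hence $|z|\leq c(\eta,A_0)$ forces $\phi_x^{-1}(z) \in B(x,R/A_0)$, i.e.\ $\C_c \subseteq \phi_x(B(x,10r_0))$ for every $R$. So for $\rho<c$ the inclusion into $\C_\rho$ fails no matter how small you take $R$. Your alternative fix, post-composing with a ``M\"obius scaling,'' also does not work as stated: M\"obius self-maps of the disk fixing $0$ are rotations, and any genuine radial compression either destroys surjectivity of $\phi_j$ onto $\C_1$ (required in (1)) or, if realized by a quasisymmetric self-map of $\C_1$ squeezing a definite disk into $\C_\rho$, makes the distortion function $\eta$ depend on $\rho$ --- which the lemma explicitly forbids, and this $\rho$-independence of $\eta$ is exactly what is needed later when $\rho$ is chosen, in the proof of Lemma~\ref{lem:qs-conformal-atlas}, in terms of constants derived from $\eta$.

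The repair is to decouple the two scales, which is what the paper does: fix the chart scale once and for all at $R=1/A_0$, so that $B(x,1/A_0^2)\subseteq U_x \subseteq B(x,1/A_0)$ and Lemma~\ref{lem:qs-embedding-equicontinuity} gives a common modulus of continuity $\omega$ for all $\phi_x$ and $\phi_x^{-1}$ depending only on the data; then choose $r_0 < 1/(10A_0^2)$ so small (depending on $\rho$ and the data) that $\omega(10r_0)\leq \rho$, which yields $\phi_x(B(x,10r_0))\subseteq \C_\rho$ while $B(x,10r_0)\subseteq U_x$ still holds automatically. The constant $\alpha$ then comes from the modulus of continuity of $\phi_x^{-1}$ exactly as you indicate (choose $\alpha$ with $\omega(\alpha)<r_0$, so $\C_\alpha \subseteq \phi_x(B(x,r_0))$); your detour through a lower bound on $l_{\phi_j}(x_j,r_0)$ via the lower Ahlfors bound is unnecessary. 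With $r_0$ chosen this way, the rest of your argument (maximal $2r_0$-separated set, disjointness and covering, $n\lesssim r_0^{-2}$) goes through as in the paper.
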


Note that the collection $\{B(x_j,10r_0)\}_{j=1}^n$ forms an open
cover of $X$ for which $8r_0$ is a Lebesgue number; cf.\ Theorem
\ref{lem:locqs}. Moreover, Lemma~\ref{lem:qs-embedding-equicontinuity}
implies that for each $j =1,\hdots,n$, the restriction
$\phi_j|_{B(x_j,10r_0)}$ and its inverse have moduli of continuity
that depend only on $\rho$ and the data of $X$.

\begin{proof}
  For each $x \in X$, let $\phi_x \colon U_x \to \C_1$ be the
  $\eta$-quasisymmetric mapping provided by Theorem
  \ref{thm:WildrickLocal} with $R=1/A_0$, so that
  \begin{equation*}
    B(x, 1/A_0^2) \subeq U_x \subeq B(x,1/A_0).    
  \end{equation*}
  Applying Lemma~\ref{lem:qs-embedding-equicontinuity} to $\phi_x$ and
  its inverse, we see that there is a common modulus of continuity
  $\omega$ for all of the mappings $\{\phi_x,\phi_x\inv\}_{x \in X}$,
  depending only on the data of $X$.

  Hence, there is a radius $0<r_0<1/10A_0^2$ and a number $0<\alpha
  < \rho$, each depending only on $\rho$ and the data of $X$, such
  that for each $x \in X$,
  \begin{equation}\label{eq:atlas size}
    \phi_x(B(x,10r_0)) \subeq \C_\rho, \ \text{and}\ 
    \phi_x(B(x,r_0))  \supeq \C_\alpha.
  \end{equation} 
  Let $\{x_1,\hdots,x_n\}$ be a maximal $2r_0$-separated set in
  $X$. Then the open balls $\{B(x_j,r_0)\}_{j=1}^n$ are pairwise
  disjoint, while the open balls $\{B(x_j,2r_0)\}_{j=1}^n$ cover
  $X$. Since $X$ is Ahlfors $2$-regular and we have assumed that
  $\diam X =1$, this implies that $n$ is comparable to $r_0^{-2}$ and
  therefore depends only on $\rho$ and the data. Moreover, as
  $10r_0<1/(A_0)^2$, for each $j=1,\hdots,n$, it holds that
  $B(x_j,10r_0) \subeq U_{x_j}$. In particular $\{U_{x_j}\}_{j=1}^{n}$
  is also a cover of $X$.  This shows that
  $\{(U_{x_j},\C_1,\phi_{x_j})\}_{j=1}^n$ is the desired atlas.
\end{proof}

We now adapt the atlas given in Lemma \ref{lem:qs-atlas} so that the
transition mappings are conformal. This step is similar to the proof
that a quasiconformal structure on a surface has a compatible
conformal structure; see \cite{Kuusalo1967} and \cite{Cannon1969}.

We recall that $\beta \in (0,1)$ is a universal constant given by
Corollaries~\ref{cor:choose_beta_C} and~\ref{cor:choose beta}.

\begin{lemma} \label{lem:qs-conformal-atlas} 
  There exists a quasisymmetric distortion function $\eta$, radii
  $\alpha$, $r_0>0$, and a positive integer $n \in \N$, all depending
  only on the data of $X$, such that the following statements hold:
  \begin{enumerate}
  \item There exists an atlas $\cB= \{ (U_j, \C_1, \psi_j)\}_{j=1}^n$
    of $X$, where each mapping $\psi_j \colon U_j \to \C_1$ is an
    $\eta$-quasisymmetric homeomorphism with center denoted by $x_j$.
  \item The collection $\{B(x_j, r_0)\}_{j=1}^n$ is pairwise disjoint.
  \item The collection $\{B(x_j,2r_0)\}_{j=1}^n$ covers $X$.
  \item For each $j=1,\hdots,n$, it holds that $B(x_j,10r_0) \subeq U_j$, and 
    \begin{equation*}
      \C_\alpha \subseteq \psi_j(B(x_j,r_0)) \subseteq 
      \psi_j(B(x_j,10r_0)) \subeq \C_\beta.
    \end{equation*}
  \item The transition maps $\psi_{j} \circ \psi_k^{-1}$
    are conformal wherever defined.
  \end{enumerate}
\end{lemma}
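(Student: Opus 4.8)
The plan is to start from the purely metric atlas $\cA_\rho$ of Lemma~\ref{lem:qs-atlas}, applied with $\rho = \beta$, and then recursively modify the charts $\phi_j$ one at a time, postcomposing with conformal maps so that the already-processed transition functions become conformal. Concretely, relabel so that $U_1, \ldots, U_n$ are the members of the atlas, and attempt to build $\psi_1, \ldots, \psi_n$ by induction on $j$. Take $\psi_1 = \phi_1$. Suppose $\psi_1, \ldots, \psi_{j-1}$ have been constructed so that $\psi_i \circ \psi_k^{-1}$ is conformal on $\psi_k(U_i \cap U_k)$ for all $i, k < j$, and so that each $\psi_i$ is $\eta'$-quasisymmetric with center $x_i$ for a controlled $\eta'$. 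To define $\psi_j$: on each overlap $U_j \cap U_i$ with $i < j$, the composition $g_{ji} := \psi_i \circ \phi_j^{-1}$ is a quasisymmetric embedding of a domain in $\C$; the point is that these should be used to \emph{conformally} correct $\phi_j$. The standard device (going back to the Kuusalo/Cannon argument referenced in the text, and ultimately to the measurable Riemann mapping theorem being unnecessary here because we are in the conformal, not merely quasiconformal, setting) is: the collection of already-conformal charts $\psi_1, \ldots, \psi_{j-1}$ endows the open set $V_j := U_j \cap (U_1 \cup \cdots \cup U_{j-1})$ with a conformal structure; since $V_j$ is a planar domain (it sits inside the single chart $U_j \cong \C_1$), that conformal structure is realized by a conformal homeomorphism $h_j$ from $V_j$ (with its induced structure) onto a domain in $\C$, and then $\psi_j$ is obtained by extending $h_j \circ$(the inclusion) to all of $U_j$ via $\phi_j$ on the complement — more carefully, one sets $\psi_j := h_j \circ \phi_j^{-1}$ where $h_j$ is chosen to be a conformal map defined on $\phi_j(V_j)$ agreeing (up to a single global conformal normalization) with the transition data, and one must check this is consistent across the different components of $V_j$. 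This is where the combinatorial/topological setup matters, and it is cleanest to order the charts so that at each stage $U_j \cap (U_1 \cup \cdots \cup U_{j-1})$ is connected (or to treat components separately); such an ordering exists because $X$ is connected and the balls $B(x_j, 2r_0)$ cover $X$.

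The quantitative control is the part that requires care, and I expect it to be the main obstacle. At each inductive step, $\psi_j$ differs from $\phi_j$ by postcomposition with a conformal map, so $\psi_j$ is automatically quasisymmetric when restricted to any relatively compact subdomain by Proposition~\ref{prop:conf qs}; but a priori the distortion function degrades at each of the $n$ steps, and $n$ depends on $\rho$ and the data. To keep the distortion uniform we cannot afford loss that compounds $n$ times. The fix — and the reason Lemma~\ref{lem:qs-atlas} is stated with the target ball $\C_\rho$ rather than all of $\C_1$, and the reason $\beta$ was chosen via Corollaries~\ref{cor:choose_beta_C} and~\ref{cor:choose beta} — is that the correcting conformal maps are controlled on a \emph{definite} neighborhood: each $h_j$ is a conformal embedding of something containing $\phi_j(B(x_j, 10r_0)) \subseteq \C_\beta$, and Corollary~\ref{cor:choose_beta_C} says that such a conformal embedding normalized to fix the center does not move points of $\C_\beta$ much relative to its own scale. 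So the strategy is: normalize each $h_j$ (postcompose with a similarity) so that $h_j(0) = 0$ and, say, $l_{h_j}(0,1/2)$ is comparable to $1$; then $\psi_j(B(x_j,10r_0)) = h_j(\phi_j(B(x_j,10r_0))) \subseteq h_j(\C_\beta) \subseteq \C_{\beta}$ by the corollary's estimate, and simultaneously $\psi_j(B(x_j,r_0)) \supseteq \C_{\alpha'}$ for a new $\alpha'$ depending only on $\alpha$ and the universal constants. Crucially, because $h_j$ is conformal on a neighborhood of $\C_\beta$ of \emph{definite size} (namely it extends to $\phi_j(U_j) \supseteq \C$... — actually it is conformal on $\phi_j(V_j)$, which contains $\C_\beta$; one invokes Proposition~\ref{prop:conf qs} with a fixed ratio), the quasisymmetric distortion of $\psi_j|_{B(x_j,10r_0)}$ is bounded in terms of $\eta$ (from Lemma~\ref{lem:qs-atlas}) and universal constants \emph{only}, with no dependence on $j$ or $n$. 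That breaks the compounding.

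Carrying this out, the steps in order are: (1) invoke Lemma~\ref{lem:qs-atlas} with $\rho = \beta$ to obtain $\cA_\beta = \{(U_j, \C_1, \phi_j)\}$ with all the stated properties and an $\eta$ depending only on the data; (2) choose an ordering of the charts so that each $V_j := U_j \cap (U_1 \cup \cdots \cup U_{j-1})$, for $j \geq 2$, has the property that we can coherently transport the conformal structure (connectedness of the relevant overlaps, arranged using that $X$ is connected and the $2r_0$-balls cover); (3) inductively define $\psi_1 = \phi_1$ and $\psi_j = h_j \circ \phi_j$ where $h_j$ is the conformal map on $\phi_j(V_j)$ realizing the conformal structure pulled back from $\psi_1, \ldots, \psi_{j-1}$, normalized by a similarity so that $h_j(0) = 0$ and $l_{h_j}(0,1/2) = 6$ (say); (4) verify conformality of all transitions: for $i,k \leq j$, $\psi_i \circ \psi_k^{-1}$ is a composition of the original transition $\phi_i \circ \phi_k^{-1}$ with the conformal corrections, and by construction equals the identity-like conformal patching, so it is conformal — this is essentially the definition of $h_j$; (5) verify the quantitative properties: $\psi_j(0) = 0$, so $x_j$ is still the center; $\psi_j(B(x_j,10r_0)) \subseteq \C_\beta$ and $\C_{\alpha'} \subseteq \psi_j(B(x_j,r_0))$ via Corollaries~\ref{cor:choose_beta_C}/\ref{cor:choose beta} applied to $h_j$ (here one uses that $G := h_j$ maps $\C_1 \supseteq \phi_j(V_j) \cap \C_1$ conformally and the normalization forces it into $\C_\beta$); and $\psi_j|_{U_j}$ is $\eta'$-quasisymmetric with $\eta'$ depending only on $\eta$ and the universal $\beta$-constants, hence only on the data, because $h_j$ restricted to $\C_\beta$ has universal distortion by Proposition~\ref{prop:conf qs} and composition of quasisymmetric maps is quasisymmetric quantitatively. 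Properties (2) and (3) of the conclusion (disjointness of $B(x_j,r_0)$, covering by $B(x_j,2r_0)$) and the inclusion $B(x_j,10r_0) \subseteq U_j$ are inherited verbatim from Lemma~\ref{lem:qs-atlas} since we have not changed the $U_j$ or $x_j$. The one genuinely delicate point, worth isolating as a sub-claim, is that the conformal structure being transported is well-defined and single-valued on each $V_j$ — i.e., that on a triple overlap $U_i \cap U_k \cap U_j$ with $i, k < j$ the two candidate conformal corrections agree; this follows from the inductive hypothesis that $\psi_i \circ \psi_k^{-1}$ is already conformal, so the correction is independent of which earlier chart we route through, but it must be stated and used.
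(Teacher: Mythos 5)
Your high-level skeleton (inductive correction of the charts from Lemma~\ref{lem:qs-atlas}, ordering the charts so each new one meets the union of the previous ones, well-definedness on multiple overlaps via the inductive conformality hypothesis, and re-choosing radii so the relevant balls land in $\C_\beta$) matches the paper's strategy, but the central mechanism of your argument is wrong. The transition maps $\phi_k \circ \phi_j\inv$ are only quasisymmetric, hence quasiconformal with (in general) nonvanishing complex dilatation; postcomposing $\phi_j$ with a map $h_j$ that is \emph{conformal with respect to the standard structure} cannot make $\psi_k \circ \psi_j\inv = (\psi_k\circ\phi_j\inv)\circ h_j\inv$ conformal, since conformal postcomposition does not change the dilatation. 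Your claim that ``the measurable Riemann mapping theorem is unnecessary because we are in the conformal setting'' is precisely backwards: the correction $h_j$ must be quasiconformal with Beltrami coefficient equal (a.e.\ on each $D_{j,k}=\phi_j(U_j\cap U_k)$) to the dilatation of $\psi_k\circ\phi_j\inv$, and its existence, defined on \emph{all} of $\C_1$ (the paper extends the coefficient by zero off the overlaps and by reflection to $\C$, so that $h_j(\C_1)=\C_1$ and $\psi_j$ is again a chart onto $\C_1$ with $\psi_j(x_j)=0$), is exactly what the Measurable Riemann Mapping Theorem provides. Even under the charitable reading that your $h_j$ is conformal only for the \emph{new} structure on $V_j$, your ``extend via $\phi_j$ on the complement'' is not a construction: no structure is defined on $U_j\setminus V_j$, gluing $H_j$ with $\phi_j$ gives no homeomorphism, and the image would not be $\C_1$; the zero-extension of the Beltrami coefficient is how the paper resolves exactly this.

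This error also vitiates your quantitative argument, which is the heart of the lemma. Since $h_j$ is genuinely quasiconformal, Proposition~\ref{prop:conf qs} and Corollary~\ref{cor:choose_beta_C} do not apply to it, and its quasisymmetric distortion is controlled by $\|\mu_j\|_\infty$. A naive induction bounds this only in terms of the dilatations of $h_k\circ\phi_k\circ\phi_j\inv$ for $k<j$, which compounds over the $n$ steps (and $n$ depends on $\rho$). The missing idea is the paper's decomposition $\C_1=\bigcup_{k<j}F_{j,k}\cup F_j$ with $F_{j,k}=D_{j,k}\setminus\bigcup_{l<k}D_{j,l}$: at a.e.\ point of $F_{j,k}$ the image under $\phi_k\circ\phi_j\inv$ lies in $F_k$, where $\mu_k=0$, so there $\mu_j$ coincides with the dilatation of the \emph{original} transition $\phi_k\circ\phi_j\inv$; hence $\|\mu_j\|_\infty\le\kappa<1$ with $\kappa$ depending only on the data, independent of $n$ and $\rho$. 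Finally, your choice $\rho=\beta$ at the outset cannot work: since $\psi_j=h_j\circ\phi_j$ with $h_j$ only uniformly quasisymmetric, one must choose $\rho$ \emph{after} obtaining the uniform modulus of continuity of the $h_j$'s, small enough that $h_j(\C_\rho)\subeq\C_\beta$ (and then shrink $\alpha$ accordingly); this is how condition (4) is actually secured, and it is legitimate only because the $\kappa$-bound above makes the distortion of the $h_j$'s independent of $\rho$.
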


\begin{proof}
  We begin by letting $0<\rho < 1$ be a number which will be
  determined below and will depend only on the data of $X$. Consider
  the atlas $\cA_{\rho}$ provided by Lemma~\ref{lem:qs-atlas}. Let us
  say that this atlas is given by $\eta'$-quasisymmetric charts
  $\{\phi_j \colon U_j \to \C_1\}_{j=1}^n$ where $\eta'$ depends only
  on the data of $X$. The associated radii $\alpha'$, $r'_0$ and the
  number of charts $n$ depend only on the data of $X$ and $\rho$.
  
  Since $X$ is connected, the charts can be relabeled to
  satisfy 
  \begin{equation*}
    U_{j+1} \cap (U_1 \cup \ldots \cup U_j) \ne \emptyset
  \end{equation*}
  for $j=1, \ldots, n-1$.

  Define $\psi_1 \colon U_1 \to \C_1$ by setting $\psi_1 = \phi_1$. We
  will iteratively construct $\psi_j \colon U_j \to \C_1$ for
  $j=2,\ldots, n$ as follows. For $k < j$, we assume that the already
  constructed map $\psi_k$ is quasisymmetric on $U_k$, has center
  $x_k$, and that if $k,k'<j$, then the transition functions $\psi_k
  \circ \psi_{k'}^{-1}$ are conformal where defined.

  In the following, we will write $D_{a,b}:=\phi_a(U_a \cap U_b)$ for
  indices $a>b$ in $\{1,\hdots,n\}$. 

  For $k<j$, define $T_{j,k} = \psi_k \circ \phi_j^{-1}$. Then
  $T_{j,k}$ is quasisymmetric and hence quasiconformal on
  $D_{j,k}$. Therefore, the complex dilatation $\mu_{j,k}$ of
  $T_{j,k}$ is well-defined (up to a.e.\ equivalence) on
  $D_{j,k}$. Given another index $k'<j$, it holds that $T_{j,k} =
  (\psi_k \circ \psi_{k'}^{-1}) \circ T_{j,k'}$ on $D_{j,k} \cap
  D_{j,k'}$. It therefore follows from the conformality of $\psi_k
  \circ \psi_{k'}^{-1}$ that $\mu_{j,k} = \mu_{j,k'}$ a.e.\ on the
  intersection $D_{j,k} \cap D_{j,k'}$. This shows that there exists a
  measurable Beltrami coefficient $\mu_j \colon \C_1 \to \C_1$ with
  $\mu_j = \mu_{j,k}$ a.~e.\ on $D_{j,k}$ for each index $1\leq k <j$,
  and $\mu_j = 0$ on $\C_1 \setminus \bigcup_{k=1}^{j-1} D_{j,k}$. We
  extend $\mu_j$ to the whole plane by $\mu_j(z) = \left(
    z/\overline{z} \right)^2 \overline{\mu_j(1/\overline{z})}$ for
  $|z|>1$. By the Measurable Riemann Mapping Theorem there exists a
  unique quasiconformal map $h_j \colon \C \to \C$, normalized by
  $h_j(0)=0$, $h_j(1) = 1$, with complex dilatation $\mu_{h_j} =
  \mu_j$ a.e.\ By the symmetry of $\mu_j$ and the chosen
  normalization, $h_j(1/\overline{z}) = 1/\overline{h_j(z)}$, and so
  $h_j (\C_1) = \C_1$. We define $\psi_j = h_j \circ \phi_j$.  The
  transformation formula for Beltrami coefficients (see, e.g.,
  \cite[IV.5.1]{LehtoVirtanen1973}) shows that if $k<j$, then $\psi_k
  \circ \psi_j\inv = T_{j,k}\circ h_j\inv$ is conformal. Moreover,
  $\psi_j(x_j) =0$.
  
  We claim that for each $j=1,\hdots,n$, the mapping $\psi_j$ has a
  quasisymmetric distortion function that depends only on the data of
  $X$; as this is true of $\phi_j$, it suffices to prove the same of
  $h_j$, and we may also assume that $j>1$. As a normalized
  quasiconformal self-map of the unit disk, $h_j$ is quasisymmetric
  with distortion controlled by the maximal dilatation
  $\|\mu_j\|_\infty$, see e.~g.\ \cite[Theorem 2.4]{Vaisala1981}. By
  an inductive argument, it is easy to show that this dilatation is
  bounded by a constant depending only on $\eta'$ (which depends only
  on the data of $X$) and the number of charts $n$ (which depends on
  the data and $\rho$). However, the bound is actually independent of
  $\rho$, by the following argument.
  
  Fix $j >1$. By the uniform quasisymmetry of $\{\phi_k\}_{k=1}^n$,
  there is a quantity $0 \leq \kappa < 1$, depending only on the data
  of $X$, such that for all indices $k< j$ with $D_{j,k} \neq
  \emptyset$, the complex dilatation $\mu_{\phi_k \circ \phi_j\inv}$
  of $\phi_k \circ \phi_j\inv \colon D_{j,k} \to \phi_k(D_{j,k})$
  satisfies $\|\mu_{\phi_k \circ \phi_j\inv}\|_{\infty} \leq \kappa.$
  For $k \in \{1,\hdots,j-1\}$, define
  \begin{equation*}
    F_{j,k} = D_{j,k} \bslash \bigcup_{l=1}^{k-1} D_{j,l}, \  F_k = \C_1 \bslash \bigcup_{l=1}^{k-1} D_{k,l}, \ 
    \text{and}\ 
    F_j = \C_1 \bslash \bigcup_{k=1}^{j-1} D_{j,k}.    
  \end{equation*}

  If $z \in F_j$, then $\mu_j(z)=0$. If $z \in F_{j,k}$, then $\phi_k
  \circ \phi_j\inv(z) \in F_k$, and hence
  \begin{equation*}
    \mu_{k}|_{\phi_k \circ \phi_j \inv (F_{j,k})} = 0.
  \end{equation*}
  The transformation formula for Beltrami coefficients now shows that
  for almost-every point $z \in F_{j,k}$,
  \begin{equation*}
    \mu_j(z) = \mu_{j,k}(z) =\mu_{\psi_k \circ \phi_j\inv}(z)= \mu_{h_k \circ \phi_k \circ \phi_j\inv}(z)  = \mu_{\phi_k \circ \phi_j \inv}(z).
  \end{equation*}

  Since $\C_1 = \bigcup_{k=1}^{j-1} F_{j,k} \cup F_j,$ we see that
  $\|\mu_j\|_\infty \leq \kappa.$ As discussed above, we may now
  conclude that each of the mappings $\{\psi_j\}_{j=1}^n$ has a
  quasisymmetric distortion function that depends only on the data of
  $X$.

  We have now seen the atlas $\cB:=\{(U_j,\C_1,\psi_j)| j =1,\hdots,n\}$
  of $X$ satisfies conditions (1) and (5) of the statement. Moreover,
  setting $r_0:=r_0'$, the conditions (2) and (3) follow directly from
  the corresponding statements for the atlas $\cA_{X,\rho}$. Note that
  only condition (4) involves the constant $\beta$. We now show how to
  choose $\rho$ so that condition (4) is satisfied.

  Let us make the convention that $h_1 \colon \C_1 \to \C_1$ is the
  identity. As discussed above, the mappings $\{h_j\}_{j=1}^n$ are
  uniformly quasisymmetric with a distortion function depending only
  on the data of $X$. Hence, by
  Lemma~\ref{lem:qs-embedding-equicontinuity}, there is a common
  modulus of continuity for all of the mappings $\{h_j,
  h_j\inv\}_{j=1}^n$ that depends only on the data of $X$.  Since
  $\beta$ is a universal constant, we may choose $\rho>0$ depending
  only on the data of $X$ such that for each $j=1,\hdots,n$, it holds
  that $h_j(\C_\rho) \subeq \C_\beta.$ Having so chosen $\rho$, the
  radius $\alpha'$ depends only on the data of $X$, and so we may also
  choose $\alpha > 0$ depending only on the data of $X$ such that for
  for each $j=1,\hdots,n$, $h_j(\C_{\alpha'}) \supeq \C_\alpha.$ This
  establishes condition (4).
\end{proof}

\subsection{Uniformizing to a standard metric}
The atlas $\mathcal{B}$ given by Lemma \ref{lem:qs-conformal-atlas}
determines a conformal structure on the compact orientable surface $X$, i.e., the
pair $(X,\mathcal{B})$ determines a Riemann surface.  By the classical
uniformization theorem, $(X,\mathcal{B})$ is conformally equivalent to
a standard Riemann surface $Y = U/\Gamma$, where $U$ denotes the
standard Riemann surface structure on the sphere, the plane, or the
unit disk, and $\Gamma$ is a discrete group of M\"obius
transformations acting freely and properly discontinuously on $U$. The
standard spherical, plane, or hyperbolic Riemannian metric on $U$ then
descends to a Riemannian metric of constant curvature $+1$, $0$, or
$-1$ on $Y$, compatible with the conformal structure. We fix a
uniformizing conformal homeomorphism $F \colon (X,\mathcal{B}) \to Y$,
and equip $Y$ with the distance function $d_Y$ arising from the
Riemannian metric of constant curvature.  Denote by $\pi$ the quotient
mapping from $U$ to $Y$.  Recall that $d_Y$ may be expressed as
$d_Y(p,q) = \inf_{\gamma} \length(\gamma),$ where the infimum is taken
over all smooth paths $\gamma$ in $U$ such that the projected path
$\pi \circ \gamma$ connects $p$ and $q$. A priori, it is not clear how
the properties of the distance $d_Y$ or the map $F$ depend on the
original metric space $(X,d)$. The following statement is the main
result of this paper, and completes the proof of Theorem~\ref{thm:main
  intro}.
\begin{theorem}
  \label{thm:main}
  The uniformizing map $F \colon (X,d) \to (Y,d_Y)$ is $\eta$-quasisymmetric,
  with distortion $\eta$ depending only on the data of $X$.
\end{theorem}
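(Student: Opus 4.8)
The plan is to apply the Tukia--V\"ais\"al\"a local-to-global criterion (Theorem~\ref{lem:locqs}) to the map $F \colon (X,d) \to (Y,d_Y)$, using the conformal atlas $\cB$ from Lemma~\ref{lem:qs-conformal-atlas} as the source of local quasisymmetry and the open cover $\{B(x_j,10r_0)\}_{j=1}^n$, which has Lebesgue number $8r_0$ depending only on the data of $X$. There are two things to establish: first, that for each $j$ the restriction $F|_{B(x_j,10r_0)}$ is $\eta$-quasisymmetric with $\eta$ depending only on the data; second, that $\diam(Y,d_Y)$ is bounded above and that the separation quantity $\delta$ (controlling $|F(x)-F(x')|$ from below whenever $|x-x'| = 4r_0$) is bounded below, both in terms of the data. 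Once these are in hand, Theorem~\ref{lem:locqs} delivers the conclusion directly, with distortion depending only on $\eta$, $\diam(X)/8r_0$ (which is $1/8r_0$ by the unit-diameter normalization), and $\diam(Y)/\delta$.

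For the local quasisymmetry, fix $j$ and write $G = F \circ \psi_j^{-1} \colon \C_\beta \to Y$. Since the transition maps $\psi_k\circ\psi_{k'}^{-1}$ are conformal and $F$ is conformal for the structure $\cB$, the composition $G$ is a conformal map from $\C_\beta \subseteq \C_1$ into the Riemann surface $Y$. Lifting $G$ locally through the covering $\pi \colon U \to Y$ and post-composing with a M\"obius chart, we get a conformal embedding of a Euclidean disk into $\C$, the plane, or $\D$; Proposition~\ref{prop:conf qs} (in the plane case) or Proposition~\ref{prop:conf qs hyp} together with the comparison of $d_Y$ to the Euclidean metric on a definite-size subdisk (sphere and disk cases) shows $G$ restricted to a slightly smaller disk is quasisymmetric with a \emph{universal} distortion function. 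Here the point of the normalization in Lemma~\ref{lem:qs-conformal-atlas}(4), namely $\psi_j(B(x_j,10r_0)) \subseteq \C_\beta$ with $\beta$ the universal constant from Corollaries~\ref{cor:choose_beta_C} and~\ref{cor:choose beta}, is precisely that the relevant subdisk is large enough to contain $\psi_j(B(x_j,10r_0))$. Composing with the uniformly quasisymmetric chart $\psi_j$ then shows $F|_{B(x_j,10r_0)}$ is $\eta$-quasisymmetric with $\eta$ depending only on the data of $X$.

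The quantitative control on $\diam(Y,d_Y)$ and on $\delta$ is, I expect, the main obstacle, because a priori nothing ties the geometry of the uniformized metric to the data of $X$; this is where "a type of Harnack inequality'' (as the introduction promises) must enter. The idea is to use the local conformal charts $G_j = F\circ\psi_j^{-1}$ to compare $d_Y$ on each set $F(B(x_j,r_0))$ to a rescaled standard metric, with the rescaling factor being a local "conformal size'' $\sigma_j := l_{G_j}(0,1/2)$ (or a comparable quantity); Corollaries~\ref{cor:choose_beta_C} and~\ref{cor:choose beta} were stated for exactly this purpose, as they bound $G_j(\C_\beta)$ inside a $d_Y$-ball of radius comparable to $\sigma_j$. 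The Harnack-type step is to show that $\sigma_j$ and $\sigma_k$ are comparable, with constant depending only on the data, whenever $B(x_j,2r_0)$ and $B(x_k,2r_0)$ intersect — this follows from the uniform quasisymmetry of the transition maps on the overlaps plus the distortion estimates for conformal maps — and then to chain these comparisons around the connected surface $X$. Since $X$ is covered by the $n$ balls $\{B(x_j,2r_0)\}$ with $n$ depending only on the data, the chain has bounded length, so all the $\sigma_j$ are comparable to one another and (after fixing a basepoint and using that $\diam X = 1$) to a single constant depending only on the data. This simultaneously bounds $\diam(Y,d_Y)$ above (cover $Y$ by the $F(B(x_j,10r_0))$, each of $d_Y$-diameter $\lesssim \sigma_j$) and bounds $\delta$ below (if $|x - x'| = 4r_0$ then $x, x'$ lie in a common chart or in overlapping charts, and the lower oscillation estimate $l_{G_j}$ together with the comparability of the $\sigma_j$ forces $|F(x) - F(x')| \gtrsim \sigma_{j} \gtrsim$ data-constant). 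With $\diam(Y)/\delta$ and $\diam(X)/L_X = 1/8r_0$ thus controlled by the data, Theorem~\ref{lem:locqs} completes the proof.
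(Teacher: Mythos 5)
Your overall architecture coincides with the paper's: local quasisymmetry of $F$ on the balls $B(x_j,10r_0)$ via the conformal charts $g_j=F\circ\psi_j^{-1}$ and the universal distortion estimates of Propositions~\ref{prop:conf qs} and~\ref{prop:conf qs hyp}, a chaining (``Harnack'') argument comparing the local sizes of overlapping charts to control $\diam Y/\delta$, and then Theorem~\ref{lem:locqs}. However, one genuinely nontrivial step is glossed over: you need to know that $d_Y$-distances between points of $g_j(\C_\beta)$ \emph{coincide} with $d_U$-distances between their lifts, i.e.\ that $\pi$ restricted to $G_j(\C_\beta)$ is an isometry onto $g_j(\C_\beta)$, not merely a locally isometric covering. ``Lifting $G$ locally'' only gives the inequality $d_Y\le d_U$; the lower bounds you need --- both for transferring the universal quasisymmetry of $G_j|_{\C_\beta}$ to $g_j|_{\C_\beta}$, and for the estimate $|F(x')-F(x_j)|_Y\ge l_{g_j}(0,\alpha)$ in your $\delta$ step --- require ruling out shortcuts in $Y$ created by the deck group $\Gamma$. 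The paper does this by observing that injectivity of $\pi$ on $G_j(\C_1)$ forces $l_{G_j}(0,1)\le r(G_j(0))$, where $r(G_j(0))$ is the minimal displacement of $\Gamma$ at $G_j(0)$, and then Corollaries~\ref{cor:choose_beta_C} and~\ref{cor:choose beta} place $G_j(\C_\beta)$ inside the ball $B_U\bigl(G_j(0), l_{G_j}(0,1/2)/6\bigr)$, on which $\pi$ is an isometry. That is the actual purpose of those corollaries, which you instead attribute to the rescaling comparison; without this injectivity-radius argument your local quasisymmetry claim and your lower bound on $\delta$ are not established.

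Two smaller points. First, your assertion that the local sizes $\sigma_j$ (and hence $\diam Y$) are bounded by an absolute constant depending only on the data ``after fixing a basepoint and using $\diam X=1$'' cannot be right: in the flat case $Y=\C/\Gamma$ carries no canonical scale, so $\diam Y$ is not determined by the data at all (and in the hyperbolic case such a bound is essentially what is being proved). Fortunately this is harmless, since Theorem~\ref{lem:locqs} only uses the scale-invariant ratio $\diam Y/\delta$, and your chaining argument --- equivalently the paper's estimate $l_{g_j}(0,\alpha)\ge \diam Y/C$ --- does control that ratio; the conclusion should be stated in that form. Second, in the spherical case your route would need a spherical analogue of Proposition~\ref{prop:conf qs hyp}, which the paper avoids by simply invoking Theorem~\ref{sphere}; either way works, but the analogue is not in the paper and would have to be supplied.
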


\begin{proof}[Proof of Theorem \ref{thm:main}.]
  In the case that $U=\hat{\C}$, then $X$ itself must be homeomorphic
  to $\hat{\C}$, and so Theorem~\ref{sphere} implies
  Theorem~\ref{thm:main}. We consider the remaining planar and
  hyperbolic cases together.

  Define $g_j \colon \C_1 \to F(U_j)$ by $g_j = F \circ \psi_j^{-1}$. Then
  $g_j$ is a conformal homeomorphism that lifts to a conformal
  embedding $G_j \colon \C_1 \to U$.

  We use a sequence of claims to complete the proof.

  \setcounter{claim}{0}
  \begin{claim}\label{claim:isometric projection} 
    Let $j \in \{1,\hdots,n\}$. The projection $\pi \colon U \to Y$
    maps $G_j(\C_{\beta})$ isometrically onto $g_j(\C_{\beta}).$ In
    other words, for each $z, z' \in \C_{\beta}$, $|g_i(z) -
    g_i(z')|_Y = |G_i(z)-G_i(z')|_{U}$.
  \end{claim}
  \begin{proof}[Proof of Claim~\ref{claim:isometric projection}]
    For $u \in U$, define
    \begin{equation*}
      r(u) = \min \{ |u-\gamma(u)|_{U} : \gamma \in \Gamma \setminus \{
      \id \} \}.
    \end{equation*}
    If $U=\C$, then $r(u)$ is independent of $u \in U$ and depends
    only on the group $\Gamma$. If $U=\D$, then $r(u)$ is bounded
    below by the minimal translation distance of a non-identity
    element of $\Gamma \setminus \{\id\}$, and is a $2$-Lipschitz
    function of $z$, see e.~.g.\ \cite[Section 7.35]{Beardon1983}. These facts
    imply that for any $u \in U$ and $r>0$,
    \begin{itemize}
    \item if $r \leq r(u)/6$, then $\pi|_{B_U(u,r)}$ is an isometry,
    \item if $\pi|_{B_U(u,r)}$ is injective, then $r \leq r(u)$.
    \end{itemize}
    Since $\pi \colon G_j(\C_1) \to g_j(\C_1)$ is injective, it follows
    that $l_{G_j}(0,1) \leq r(G_j(0))$.
    Corollaries~\ref{cor:choose_beta_C} and~\ref{cor:choose beta} now
    complete the proof of this claim. 
  \end{proof}
  \begin{claim}\label{claim:F_local_qs} 
    For each $j=1,\hdots,n$, the mapping $F|_{B(x_j,10r_0)}$ is
    quasisymmetric with distortion that depends only on the data of
    $X$.
  \end{claim} 
  \begin{proof}[Proof of Claim~\ref{claim:F_local_qs}]
    By Propositions~\ref{prop:conf qs} and \ref{prop:conf qs hyp}, the
    mapping $G_j$ restricted to $\C_\beta$ is quasisymmetric with a
    universal distortion function. By Claim~\ref{claim:isometric
      projection}, this is also true of $g_j$. Since we may write
    $F=g_j \circ \psi_j$, and $\psi_j$ is $\eta$-quasisymmetric where
    $\eta$ depends only on the data, it follows that for each $j
    =1,\hdots, n$, the mapping $F|_{\psi_j\inv(\C_\beta)}$ is
    quasisymmetric with distortion that depends only on the data of
    $X$. According to Lemma \ref{lem:qs-conformal-atlas},
    $\psi_j\inv(\C_\beta)\supeq B(x_j,10r_0)$, implying the claim.
  \end{proof}

  \begin{claim}\label{claim:conf_radius_bounds_hyp} 
    There is a constant $C\geq 1$ depending only on the data of $X$
    such that for each $j=1,\hdots, n$,
    \begin{equation}\label{conf_radius_bounds_hyp}
      l_{g_j}(0,\alpha) \geq \frac{\diam Y}{C}.
    \end{equation}
  \end{claim}
  \begin{proof}[Proof of Claim~\ref{claim:conf_radius_bounds_hyp}]
    As $\{B_X(x_j,2r_0)\}_{j=1}^n$ covers $X$ and
    $\psi_j(B_X(x_j,10r_0)) \subeq \C_{\beta}$, it holds that
    $\{\psi_j\inv(\C_{\beta})\}_{j=1}^n$ also covers $X$. Since $F$ is
    a homeomorphism, this implies that $Y = \bigcup_{j=1}^n
    g_j(\C_{\beta}).$ As $Y$ is connected, it follows that
    \begin{equation}\label{max} 
      \max\{\diam g_j(\C_{\beta}): j=1,\hdots,n\} \geq \frac{\diam
        Y}{n}.
    \end{equation} 
    Recall that the number of charts $n$ depends only on the data of
    $X$.

    Consider indices $j$ and $k$ in $\{1,\hdots,n\}$, and suppose that
    $|x_j-x_k|_X < 4r_0$. Then $\psi_j(x_k) \in \C_{\beta}$, and so
    $d_Y(F(x_j),F(x_k)) \leq L_{g_j}(0,\beta).$ On the other hand,
    $|x_j-x_k| \geq r_0$, and so $F(x_k) \notin g_j(\C_{\alpha})$,
    implying $d_Y(F(x_j),F(x_k)) \geq l_{g_j}(0,\alpha)$.  Moreover,
    since $g_j \colon \C_{\beta} \to Y$ is a quasisymmetric embedding
    with universal distortion function, there is a quantity $C_0 \geq
    1$ depending only on the ratio of $\beta$ to $\alpha$, and hence only
    on the data of $X$, such that
    \begin{equation*}
      l_{g_j}(0,\alpha) \geq \frac{L_{g_j}(0,\beta)}{C_0} \geq
      \frac{\diam g_j(\C_\beta)}{2C_0}
    \end{equation*}
    Since in addition,
    \begin{equation*}
      \diam g_j(\C_\beta) \geq L_{g_j}(0,\beta) \geq
      l_{g_j}(0,\alpha),
    \end{equation*}
    we have now shown that the quantities
    \begin{equation*}
      l_{g_j}(0,\alpha),\ L_{g_j}(0,\beta), \ \diam g_j(\C_\beta), \
      \text{and}\ d_Y(F(x_j),F(x_k))
    \end{equation*}
    are all comparable with constants that depend
    only on the data of $X$. The same is true with the roles of $j$
    and $k$ reversed.
    
    Since the open sets $\{B(x_j,2r_0)\}_{j=1}^n$ cover the connected
    space $X$, for any pair of indices $j,j' \in \{1,\hdots,n\}$,
    there is a sequence of distinct indices $j=j_1,\hdots, j_k=j'$ of
    length at most $n$ so that $|x_{j_i}-x_{j_{i+1}}|<4r_0$ for each
    $i=1,\hdots, k-1.$ Since $n$ depends only on the data of $X$,
    \eqref{max} proves the claim.
  \end{proof}

  We complete the proof of Theorem~\ref{thm:main} by employing
  Theorem~\ref{lem:locqs}. We consider the covering of $X$ given by
  $\{B(x_j,10r_0): j=1,\hdots, n \}$. By Claim \ref{claim:F_local_qs},
  the mapping $F$ is quasisymmetric on each element of this cover with
  a distortion function $\eta$ that depends only on the data of
  $X$. Moreover, Lemma \ref{lem:qs-conformal-atlas} implies that this
  cover has Lebesgue number $8r_0$. Suppose that $x,x' \in X$ satisfy
  $|x-x'| = 4r_0$. We may find indices $j$ and $k$ in $\{1,\hdots,n\}$
  such that $|x-x_j|< 2r_0$ and $|x'-x_k| < 2r_0$. Then $2r_0 \le
  |x'-x_j| \le 6 r_0$, so $ x, x', x_j \in B(x_j,10 r_0)$ and
  \begin{equation*}
    |F(x')-F(x)| \geq
    \frac{|F(x')-F(x_j)|}{\eta\left(\frac{|x'-x_j|}{|x'-x|}\right)} \geq
    \frac{|F(x')-F(x_j)|}{\eta\left(\frac{3}{2}\right)} 
  \end{equation*}
  Since $x' \notin B(x_j,r_0)$, we see that $F(x') \notin
  g_j(\C_{\alpha}),$ so by Claim \ref{claim:conf_radius_bounds_hyp},
  \begin{equation*}
    |F(x')-F(x_j)| = |F(x')-g_j(0)| \geq l_{g_j}(0,\alpha) \geq
    \frac{\diam Y}{C}.
  \end{equation*}
  Now Theorem \ref{lem:locqs} implies that $F \colon X \to Y$ is
  quasisymmetric with distortion function depending only on the data
  of $X$.
\end{proof}
\bibliography{everything}

\def\cprime{$'$}
\providecommand{\bysame}{\leavevmode\hbox to3em{\hrulefill}\thinspace}
\providecommand{\MR}{\relax\ifhmode\unskip\space\fi MR }
\providecommand{\MRhref}[2]{%
  \href{http://www.ams.org/mathscinet-getitem?mr=#1}{#2}
}
\providecommand{\href}[2]{#2}
\begin{thebibliography}{MW13}

\bibitem[Bea83]{Beardon1983}
Alan~F. Beardon, \emph{The geometry of discrete groups}, Graduate Texts in
  Mathematics, vol.~91, Springer-Verlag, New York, 1983. \MR{698777}

\bibitem[BK02]{BonkKleiner2002}
Mario Bonk and Bruce Kleiner, \emph{Quasisymmetric parametrizations of
  two-dimensional metric spheres}, Invent. Math. \textbf{150} (2002), no.~1,
  127--183. \MR{1930885 (2004k:53057)}

\bibitem[Bon06]{Bonk2006}
Mario Bonk, \emph{Quasiconformal geometry of fractals}, International
  {C}ongress of {M}athematicians. {V}ol. {II}, Eur. Math. Soc., Z\"urich, 2006,
  pp.~1349--1373. \MR{2275649}

\bibitem[Can69]{Cannon1969}
Raymond~J. Cannon, Jr., \emph{Quasiconformal structures and the metrization of
  {$2$}-manifolds}, Trans. Amer. Math. Soc. \textbf{135} (1969), 95--103.
  \MR{0232932 (38 \#1255)}

\bibitem[Dur83]{Duren1983}
Peter~L. Duren, \emph{Univalent functions}, Grundlehren der Mathematischen
  Wissenschaften [Fundamental Principles of Mathematical Sciences], vol. 259,
  Springer-Verlag, New York, 1983. \MR{708494}

\bibitem[Hei01]{Heinonen2001}
Juha Heinonen, \emph{Lectures on analysis on metric spaces}, Universitext,
  Springer-Verlag, New York, 2001. \MR{1800917 (2002c:30028)}

\bibitem[Kuu67]{Kuusalo1967}
Tapani Kuusalo, \emph{Verallgemeinerter {R}iemannscher {A}bbidungssatz und
  quasikonforme {M}annigfaltigkeiten}, Ann. Acad. Sci. Fenn. Ser. A I No.
  \textbf{409} (1967), 24. \MR{0218560}

\bibitem[LV73]{LehtoVirtanen1973}
O.~Lehto and K.~I. Virtanen, \emph{Quasiconformal mappings in the plane},
  second ed., Springer-Verlag, New York-Heidelberg, 1973, Translated from the
  German by K. W. Lucas, Die Grundlehren der mathematischen Wissenschaften,
  Band 126. \MR{0344463 (49 \#9202)}

\bibitem[MW13]{MerenkovWildrick2013}
Sergei Merenkov and Kevin Wildrick, \emph{Quasisymmetric {K}oebe
  uniformization}, Rev. Mat. Iberoam. \textbf{29} (2013), no.~3, 859--909.
  \MR{3090140}

\bibitem[TV80]{TukiaVaisala1980}
P.~Tukia and J.~V{\"a}is{\"a}l{\"a}, \emph{Quasisymmetric embeddings of metric
  spaces}, Ann. Acad. Sci. Fenn. Ser. A I Math. \textbf{5} (1980), no.~1,
  97--114. \MR{595180 (82g:30038)}

\bibitem[V{\"a}i81]{Vaisala1981}
Jussi V{\"a}is{\"a}l{\"a}, \emph{Quasisymmetric embeddings in {E}uclidean
  spaces}, Trans. Amer. Math. Soc. \textbf{264} (1981), no.~1, 191--204.
  \MR{597876 (82i:30031)}

\bibitem[Wil08]{Wildrick2008}
K.~Wildrick, \emph{Quasisymmetric parametrizations of two-dimensional metric
  planes}, Proc. Lond. Math. Soc. (3) \textbf{97} (2008), no.~3, 783--812.
  \MR{2448247 (2010b:30035)}

\bibitem[Wil10]{Wildrick2010}
Kevin Wildrick, \emph{Quasisymmetric structures on surfaces}, Trans. Amer.
  Math. Soc. \textbf{362} (2010), no.~2, 623--659. \MR{2551500 (2010h:30090)}

\end{thebibliography}
\end{document}